\documentclass[12pt,english]{extarticle}
\usepackage[utf8]{inputenc}
\usepackage[a4paper]{geometry}
\geometry{verbose,tmargin=3cm,bmargin=3cm,lmargin=2cm,rmargin=2cm,headheight=1.5cm,headsep=1cm,footskip=1.5cm}
\usepackage{fancyhdr}
\pagestyle{fancy}
\usepackage{color}
\usepackage{verbatim}
\usepackage{amsmath}
\usepackage{amsthm}
\usepackage{amssymb}

\makeatletter
\numberwithin{equation}{section}
\numberwithin{figure}{section}
\theoremstyle{plain}
\newtheorem{thm}{\protect\theoremname}[section]
\theoremstyle{plain}
\newtheorem{lem}[thm]{\protect\lemmaname}
\theoremstyle{definition}
\newtheorem{defn}[thm]{\protect\definitionname}
\theoremstyle{plain}
\newtheorem{prop}[thm]{\protect\propositionname}
\theoremstyle{remark}
\newtheorem{rem}[thm]{\protect\remarkname}


\usepackage[english]{babel}
\usepackage{amsthm}
\usepackage{amsfonts}

\usepackage{hyperref}

\usepackage{color}

\definecolor{rot}{rgb}{1.000,0.000,0.000}

\usepackage{tikz}

\usepackage{tkz-graph}
\GraphInit[vstyle = Shade]
\tikzset{
  LabelStyle/.style = {rectangle, rounded corners, draw,
                        minimum width = 2em, 
                        text = black, font = \bfseries },
  VertexStyle/.style = {circle, draw, 
                                font =  \large\bfseries},
  EdgeStyle/.style = {->, bend left} }
\thispagestyle{empty}

\newcommand{\N}{\mathbb{N}}
\newcommand{\R}{\mathbb{R}}

\newcommand{\cE}{\mathcal{E}}

\newcommand{\cD}{\mathcal{D}}

\newcommand{\e}{\mathrm{e}}

\renewcommand{\L}{\mathrm{L}}

\renewcommand{\d}{\mathrm{d}}

\definecolor{rot}{rgb}{1.000,0.000,0.000}

\newcommand{\rra}{\rangle\!\rangle}
\newcommand{\lla}{\langle\!\langle}
\newcommand{\one}{1\!\!1}


\makeatother

\usepackage{babel}
\providecommand{\definitionname}{Definition}
\providecommand{\lemmaname}{Lemma}
\providecommand{\propositionname}{Proposition}
\providecommand{\remarkname}{Remark}
\providecommand{\theoremname}{Theorem}

\begin{document}
\title{Coarse-graining and reconstruction for Markov matrices\thanks{Research supported by DFG via SFB 1114 (project no.235221301, subproject C05).}}
\author{Artur Stephan\thanks{Weierstraß-Institut für Angewandte Analysis und Stochastik, Mohrenstraße 39, 10117 Berlin, Germany, e-mail: \tt{artur.stephan@wias-berlin.de}}}
\date{\today}
\maketitle

\lhead{Coarse-graining and reconstruction}

\rhead{Artur Stephan}

\chead{\today}
\begin{abstract}
We present a coarse-graining (or model order reduction) procedure
for stochastic matrices by clustering. The method is consistent with
the natural structure of Markov theory, preserving positivity and
mass, and does not rely on any tools from Hilbert space theory. The
reconstruction is provided by a generalized Penrose-Moore inverse
of the coarse-graining operator incorporating the inhomogeneous invariant
measure of the Markov matrix. As we show, the method provides coarse-graining
and reconstruction also on the level of tensor spaces, which is consistent
with the notion of an incidence matrix and quotient graphs, and, moreover,
allows to coarse-grain and reconstruct fluxes. Furthermore, we investigate
the connection with functional inequalities and Poincaré-type constants.

\vspace{0.3cm}

\end{abstract}

\section{Introduction}

Coarse-graining or model reduction is a fundamental procedure that
reduces the complexity of a physical model. It is a well-established
tool used in many branches of applied mathematics including analysis,
modeling and numerics. In this paper we are interested in coarse-graining
for physical systems on a finite state space described by Markov matrices.

Let us first describe the mathematical setting. Fixing a finite state
spaces $\mathcal{Z}=\left\{ 1,\dots,n\right\} $, $n\in\N$, the statistical
states are given by the set of probability vectors
\[
\mathrm{Prob}(\mathcal{Z})=\left\{ p\in\R^{n}:p_{i}\geq0,\sum_{i=1}^{n}p_{i}=1\right\} \subset\R^{n}:=X^{*}.
\]
Important is the distinction between primal $X$ and dual spaces $X^{*}$
(although both are isomorphic to the $\R^{n}$ as real-vector spaces),
where the first contains functions on $\mathcal{Z}$ equipped with
the supremum-norm, and the second contains probabilities on $\mathcal{Z}$
equipped with the 1-norm. Dual pairing is denoted by $\langle\cdot,\cdot\rangle$.
Apart from $X$ being a vector space, it has a natural order, i.e.
$x\geq y$ if the pointwise estimate $x_{i}\geq y_{i}$ holds for
all $i\in\mathcal{Z}$. Moreover, it is an algebra, i.e. product of
two elements $x$ and $y$ is given by $\left(x\cdot y\right)_{i}=x_{i}\cdot y_{i}$
by pointwise multiplication.

The change of statistical states is described by a Markov matrix (or
operator) $K:X\to X$, which, by definition, satisfies $K_{ij}\geq0$
and $\sum_{j\in\mathcal{Z}}K_{ij}=1$ for all $i\in\mathcal{Z}$.
Equivalently, $K$ maps non-negative elements in $X$ to non-negative
and $K1\!\!1=1\!\!1$, where $\one_{X}=(1,\dots,1)^{\mathrm{T}}$
is the constant one-vector. The invariant vector $\pi\in\mathrm{Prob}(\mathcal{Z})\subset X^{*}$
of $K^{*}$ is defined by satisfying $\sum_{i}\pi_{i}K_{ij}=\pi_{j}$,
or, equivalently, by $\pi^{T}K=\pi^{T}$, or $K^{*}\pi=\pi$. Throughout
the paper, we assume that the invariant vector is unique and positive.
(See e.g. \cite{Norr97MC} for introductory reading on Markov matrices.)
We remark that for further generalizations to infinite and continuous
state spaces (in which case $X$ and $X^{*}$ are infinite dimensional)
we denote by $B^{*}:Y^{*}\to X^{*}$ the dual (or adjoint) operator
of $B:X\to Y$, which is just the (real) transpose in matrix representation.

The aim of the paper is to present an operator-theoretic and structure
preserving approach to coarse-graining for Markov matrices. In contrast
to classical model order reduction procedures, it does not rely on
tools from Hilbert space theory like orthonormal projections or symmetry,
as for example Krylov subspace projection methods and so on (see e.g.
\cite{SVR08MOR}). In general, Hilbert space projections will not
preserve positivity of the measures, which is unphysical. Here, the
approach is more direct and based on clusters. Reduction methods based
on clusters as \cite{ChSc20CBMR,CYRS20ROMNS} preserve the graph theoretical
structure for Markov chains, but do not distinguish between primal
and dual spaces. With that, the theory is based on homogenous Euclidean
spaces, which are not canonical for Markov matrices as the invariant
measure $\pi\in\mathrm{Prob}(\mathcal{Z})$ is in general not homogenous
and the natural Hilbert space would be $\L^{2}(\pi)$.

Here, the coarse-graining procedure is based on the structural duality
between $X$ and $X^{*}$. Implicitly, the involved operators (maybe
in a modified form) have been used in literature (see e.g. \cite{ChSc20CBMR,PMK06RedCME})
and we recall technical results from \cite{MieSte19ECLRS} in Section
\ref{sec:Coarse-graining}. However, to the author's knowledge no
structural study has been done so far. Heuristically, the reconstruction
procedure rebuilds the information from the coarser system back to
the finer system using the local information of the invariant probability
vector $\pi$. The reconstruction operators can be understood as generalized
Penrose-Moore inverse of the coarse-graining operator respecting the
(in general inhomogeneous) invariant measure $\pi$. This procedure
has many different mathematical advantages, which become clear in
the following. First, it is consistent with the graph-theoretic notion
of a incidence matrix (or operator) and quotient graph (see Section
\ref{sec:Coarse-grained-network}). Moreover, it provides tools for
reconstructing functions on tensor spaces (e.g. fluxes defined on
edges) as well (see Section \ref{subsec:Flux-reconstruction} for
more details).

Another important question regarding coarse-graining is the question
how eigenvalues depend on the reduction procedure. The first nontrivial
eigenvalue defines the spectral gap and provides information regarding
asymptotical decay of the process (see e.g. \cite{BoTe06LSI}). In
the last decades, functional inequalities for Markov processes on
discrete states spaces have been studied intensively \cite{BoTe06LSI,ErbMaa12RCFM,John2017DLSI,FatShu18CTUMCDS,FaFa21SOSLS}.
In Section \ref{sec:FunctiIn-PoincareConstants}, we derive the connection
between coarse-graining and energy functionals. In particular, we
derive estimates for functional inequalities and discrete Poincaré-type
constants (like the Poincaré constant, or log-Sobolev constant).

\section{Coarse-graining\label{sec:Coarse-graining}}

We present the operator theoretic framework for capturing the collection
of states, which has also been introduced in \cite{MieSte19ECLRS}.

\subsection{Operator theoretic coarse-graining}

For two finite state spaces $\mathcal{Z}=\left\{ 1,\dots,n\right\} $,
$\hat{\mathcal{Z}}=\left\{ 1,\dots,\hat{n}\right\} $ with $\hat{n}<n$,
we assume that there is a given surjective function $\phi:\mathcal{Z}\to\hat{\mathcal{Z}}$,
which plays the role of a coarse-graining or clustering map. We define
the \textit{coarse-graining operator} $M:\hat{X}\to X$ by $(M\hat{x})_{i}=\hat{x}_{\phi(i)}$
for all $\hat{x}\in\hat{X}$. One easily sees that $M$ is a deterministic
Markov matrix since the adjoint (or dual) matrix $M^{*}:X^{*}\to\hat{X}^{*}$
maps pure states (or dirac-measures) to pure states. In fact the dual
operator $M^{*}$ should be called \textit{coarse-graining operator}
because it maps statistical states in $X^{*}$ to coarser states in
$\hat{X}^{*}$. Since $M$ is a deterministic Markov matrix we have
that for all $\hat{x},\hat{y}\in\hat{X}$ it holds $M(\hat{x}\cdot\hat{y})=M\hat{x}\cdot M\hat{y}$,
where the multiplication is meant pointwise. (By the way, this characterizes
all deterministic Markov matrices.)

Fixing a positive probability vector $\pi\in X^{*}$, we may define
the multiplication operator given by the diagonal matrix $Q_{\pi}:X\to X^{*}$,
i.e. $\left(Q_{\pi}x\right)_{i}=\pi_{i}x_{i}$. We observe that $Q_{\pi}$
is symmetric and its inverse is given by $Q_{\pi}^{-1}:X^{*}\to X$,
$p\mapsto\rho=(p_{i}/\pi_{i})_{i}$. One easily sees that the multiplication
operator satisfies $\langle x,Q_{\pi}y\rangle=\langle x\cdot y,\pi\rangle$
for all $x,y\in X$. Although the spaces are finite-dimensional and
isomorphic, we remark that the parameter $\pi$ of $Q_{\pi}$ is an
element of the dual space and the inverse $Q_{\pi}^{-1}:p\mapsto\rho$
maps a probability vector $p$ to the relative density $\rho$ of
$p$ with respect to $\pi$ as a discrete analogue of the Radon-Nikodym
derivative.

We define a new coarse-grained measure $\hat{\pi}$ by $\hat{\pi}=M^{*}\pi$.
We easily observe that $\hat{\pi}$ is also positive. We have the
following characterization of a deterministic Markov matrix.
\begin{lem}[{\cite[Lemma 2.4]{MieSte19ECLRS}}]
\label{lem:DetMarkovMatrix}We have $\hat{\pi}=M^{*}\pi$ if and
only if $Q_{\hat{\pi}}=M^{*}Q_{\pi}M$.
\end{lem}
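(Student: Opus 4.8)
The plan is to verify the operator identity $Q_{\hat{\pi}}=M^{*}Q_{\pi}M$ by comparing bilinear forms, exploiting that $M$ is a deterministic Markov matrix and hence multiplicative, $M(\hat{x}\cdot\hat{y})=M\hat{x}\cdot M\hat{y}$. Both $Q_{\hat{\pi}}$ and $M^{*}Q_{\pi}M$ are operators $\hat{X}\to\hat{X}^{*}$, so it suffices to evaluate the pairing $\langle\hat{y},\,\cdot\,\hat{x}\rangle$ for arbitrary $\hat{x},\hat{y}\in\hat{X}$ and show the two forms coincide precisely when $\hat{\pi}=M^{*}\pi$.

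First I would compute the left-hand form using the identity $\langle x,Q_{\pi}y\rangle=\langle x\cdot y,\pi\rangle$, here applied in the coarse space with $\hat{\pi}$ in place of $\pi$, giving
\begin{equation*}
\langle\hat{y},Q_{\hat{\pi}}\hat{x}\rangle=\langle\hat{x}\cdot\hat{y},\hat{\pi}\rangle .
\end{equation*}
Next I would compute the right-hand form by moving $M$ across the dual pairing, applying the multiplication identity in $X$, then the multiplicativity of the deterministic Markov matrix $M$, and finally moving $M^{*}$ back:
\begin{equation*}
\langle\hat{y},M^{*}Q_{\pi}M\hat{x}\rangle=\langle M\hat{y},Q_{\pi}M\hat{x}\rangle=\langle M\hat{x}\cdot M\hat{y},\pi\rangle=\langle M(\hat{x}\cdot\hat{y}),\pi\rangle=\langle\hat{x}\cdot\hat{y},M^{*}\pi\rangle .
\end{equation*}
Subtracting the two expressions yields, for all $\hat{x},\hat{y}\in\hat{X}$, the single identity
\begin{equation*}
\langle\hat{y},\bigl(Q_{\hat{\pi}}-M^{*}Q_{\pi}M\bigr)\hat{x}\rangle=\langle\hat{x}\cdot\hat{y},\,\hat{\pi}-M^{*}\pi\rangle .
\end{equation*}

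From this both implications drop out. If $\hat{\pi}=M^{*}\pi$, the right-hand side vanishes for all $\hat{x},\hat{y}$, so the bilinear forms agree and $Q_{\hat{\pi}}=M^{*}Q_{\pi}M$. Conversely, if $Q_{\hat{\pi}}=M^{*}Q_{\pi}M$, then $\langle\hat{x}\cdot\hat{y},\hat{\pi}-M^{*}\pi\rangle=0$ for all $\hat{x},\hat{y}$; choosing $\hat{y}=\one$, the constant one-vector in $\hat{X}$, reduces $\hat{x}\cdot\hat{y}$ to $\hat{x}$, which ranges over all of $\hat{X}$, and hence forces $\hat{\pi}-M^{*}\pi=0$. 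The only point requiring a word of care is this final non-degeneracy step: one must know that the products $\hat{x}\cdot\hat{y}$ exhaust $\hat{X}$ so that annihilation against all of them genuinely pins down the measure, and the choice $\hat{y}=\one$ settles it. Everything else is the routine bookkeeping of pushing $M$ and $M^{*}$ through the pairing, so I anticipate no real obstacle. As a sanity check one may instead compute $M^{*}Q_{\pi}M$ entrywise and recover $(M^{*}Q_{\pi}M\hat{x})_{a}=\bigl(\sum_{i\in\phi^{-1}(a)}\pi_{i}\bigr)\hat{x}_{a}=\hat{\pi}_{a}\hat{x}_{a}$, which exhibits the diagonal structure directly and confirms that $Q_{(\cdot)}$ is injective on measures.
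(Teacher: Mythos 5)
Your proposal is correct and takes essentially the same route as the paper: your key step $\langle M\hat{y},Q_{\pi}M\hat{x}\rangle=\langle M\hat{x}\cdot M\hat{y},\pi\rangle=\langle M(\hat{x}\cdot\hat{y}),\pi\rangle$ is precisely the weak (bilinear-form) version of the paper's intertwining relation $M\Pi_{\hat{y}}=\Pi_{M\hat{y}}M$ for multiplication operators, both resting on the multiplicativity of the deterministic Markov matrix $M$. Your converse via the choice $\hat{y}=\hat{\one}$ coincides with the paper's step of evaluating the operator identity at $\hat{\one}$, so no gap remains.
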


\begin{proof}
Evaluating $Q_{\hat{\pi}}=M^{*}Q_{\pi}M$ at $\hat{\one}$, we have
$\hat{\pi}=Q_{\hat{\pi}}\hat{\one}=M^{*}Q_{\pi}M\hat{\one}=M^{*}Q_{\pi}\one=M^{*}\pi$,
which is one direction of the claim.

For the other claim, we introduce the multiplication operator $\Pi_{\hat{y}}:\hat{X}\to\hat{X}$
by $\left(\Pi_{\hat{y}}\hat{x}\right)_{j}=\hat{y}_{j}\hat{x}_{j}$
with the dual operator $\Pi_{\hat{y}}^{*}:\hat{X}^{*}\to\hat{X}^{*}$
given by $\Pi_{\hat{y}}^{*}\hat{c}=Q_{\hat{c}}\hat{y}$. Using that
$M$ is a deterministic operator we have $M\Pi_{\hat{y}}=\Pi_{M\hat{y}}M$,
which implies by dualizing $\Pi_{\hat{y}}^{*}M^{*}=M^{*}\Pi_{M\hat{y}}^{*}$.
So we get for any $\hat{x}$ that $Q_{\hat{\pi}}\hat{x}=\Pi_{\hat{x}}^{*}\hat{\pi}=\Pi_{\hat{x}}^{*}M^{*}\pi=M^{*}\Pi_{M\hat{x}}^{*}\pi=M^{*}Q_{\pi}M\hat{x}$.
\end{proof}
This important relation does not hold if $M$ is not a deterministic
operator. Moreover, it implies that the following diagram commutes:

\vspace{-0.5cm}

\begin{figure}[h]
\begin{center} 
\begin{tikzpicture}
\node (n1) at (0,0) {$\hat X^*$};
\node (n2) at (0,1.3) {$\hat X$};
\node (n3) at (2,0) {$X^*$};
\node (n4) at (2,1.3) {$X$};
\draw[ultra thick, ->] (n2)-- node[pos=0.5,above,color=black]{$M$}(n4) ; 
\draw[ultra thick, ->] (n3)-- node[pos=0.5,above,color=black]{$M^*$}(n1) ;
\draw[ultra thick, ->] (n2)-- node[pos=0.5,left,color=black]{$Q_{\hat\pi}$}(n1) ;
\draw[ultra thick, ->] (n4)-- node[pos=0.5,right,color=black]{$Q_{\pi}$}(n3) ;
\end{tikzpicture}\end{center}
\end{figure}

\vspace{-1cm}
\begin{flushleft}
Since $Q_{\hat{\pi}}$ is invertible, it is now possible to ``invert''
the coarse-graining operator $M$, by defining the so-called \textit{reconstruction
operator} $N:X\to\hat{X}$:
\begin{equation}
N=Q_{\hat{\pi}}^{-1}M^{*}Q_{\pi}:X\to\hat{X},\quad N^{*}=Q_{\pi}MQ_{\hat{\pi}}^{-1}:\hat{X}^{*}\to X^{*}.\label{eq:DefinitionN}
\end{equation}
\par\end{flushleft}

\begin{figure}[h]
\begin{center} 
\begin{tikzpicture}
\node (n1) at (0,0) {$\hat X^*$};
\node (n2) at (0,1.3) {$\hat X$};
\node (n3) at (2,0) {$X^*$};
\node (n4) at (2,1.3) {$X$};


\draw[bend left=10,ultra thick,->] (n2) to node[pos=0.5,above,color=black]{$M$}(n4) ; 
\draw[bend left=10,ultra thick,->,blue] (n4) to  node[pos=0.5,below,color=black]{$N$}(n2) ; 
\draw[bend right=10,ultra thick,->] (n3) to node[pos=0.5,above,color=black]{$M^*$}(n1) ;
\draw[bend right=10,ultra thick,->, blue] (n1) to node[pos=0.5,below,color=black]{$N^*$}(n3) ;

\draw[ultra thick,->] (n2)-- node[pos=0.5,left,color=black]{$Q_{\hat\pi}$}(n1) ;
\draw[ultra thick,->] (n4)-- node[pos=0.5,right,color=black]{$Q_{\pi}$}(n3) ;
\end{tikzpicture}\end{center}
\end{figure}

\newpage

The operator $N^{*}$ reconstructs the coarser statistical states
in $\hat{X}^{*}$ respecting the measure $\pi\in X^{*}$. Before summarizing
properties of $N$ in the next proposition, we introduce the notion
of \textit{detailed balance}.
\begin{defn}
\label{def:DetailedBalance}A Markov matrix $K$ is said to satisfy
the $\textit{detailed balance condition}$ with respect to its positive
invariant measure $\pi$, if $K^{*}Q_{\pi}=Q_{\pi}K$.
\end{defn}

\begin{prop}[{\cite[Lemma 2.5, Proposition 2.7]{MieSte19ECLRS}}]
\label{prop:CoarseGrainingProperties}Let $M:\hat{X}\to X$ be a
deterministic Markov matrix and let $\pi\in X^{*}$ be a given positive
probability vector. Let $N$ and $N^{*}$ be defined by \eqref{eq:DefinitionN}.
Then the following holds:
\begin{enumerate}
\item $N$ is a Markov matrix and $N^{*}\hat{\pi}=\pi$, i.e. $N^{*}$ inverts
with respect to $\pi$.
\item $NM=\mathrm{id}_{\hat{X}}$ and $MN=:P$ is a (Markov) projection
on $X$. We have the splitting $X=\mathrm{Range}(P)+\mathrm{Ker}(P)=\mathrm{Range}(M)+\mathrm{Ker}(N)$.
The adjoint $P^{*}$ has $\pi$ as its stationary measure and satisfies
detailed balance.
\end{enumerate}
\end{prop}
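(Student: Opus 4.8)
The entire proposition is algebraic, and the engine that drives it is Lemma~\ref{lem:DetMarkovMatrix}, which supplies the identity $M^*Q_\pi M=Q_{\hat\pi}$. The plan is to keep this relation, together with the Markov normalizations $M\hat\one=\one$ and $Q_\pi\one=\pi$, as the three facts to chain throughout; no analytic input is needed beyond these.

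For part (1), I would check the two defining properties of a Markov matrix separately. Positivity is immediate: $N=Q_{\hat\pi}^{-1}M^*Q_\pi$ is a composition of a positive diagonal matrix, the nonnegative matrix $M^*$, and another positive diagonal matrix, so it sends nonnegative vectors to nonnegative vectors. For the normalization I would compute $N\one = Q_{\hat\pi}^{-1}M^*Q_\pi\one = Q_{\hat\pi}^{-1}M^*\pi = Q_{\hat\pi}^{-1}\hat\pi = \hat\one$, using $\hat\pi=M^*\pi$ and that $Q_{\hat\pi}^{-1}$ inverts the diagonal entries. The inversion statement $N^*\hat\pi=\pi$ is then the dual computation $N^*\hat\pi = Q_\pi M Q_{\hat\pi}^{-1}\hat\pi = Q_\pi M\hat\one = Q_\pi\one = \pi$.

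For part (2), the identity $NM=\mathrm{id}_{\hat X}$ is exactly Lemma~\ref{lem:DetMarkovMatrix} in disguise, since $NM=Q_{\hat\pi}^{-1}(M^*Q_\pi M)=Q_{\hat\pi}^{-1}Q_{\hat\pi}=\mathrm{id}$. From $NM=\mathrm{id}$ the projection property of $P=MN$ is a one-liner, $P^2=M(NM)N=MN=P$, and $P$ is Markov as a product of Markov matrices (in particular $P\one=M\hat\one=\one$). The direct-sum decomposition $X=\mathrm{Range}(P)\oplus\mathrm{Ker}(P)$ is the standard fact for any idempotent; I would then identify the summands by showing $\mathrm{Range}(P)=\mathrm{Range}(M)$ (the inclusion $\subseteq$ is clear, and $M\hat x=P(M\hat x)$ via $NM=\mathrm{id}$ gives $\supseteq$) and $\mathrm{Ker}(P)=\mathrm{Ker}(N)$ (again $\subseteq$ is clear, while applying $N$ to $MNx=0$ together with $NM=\mathrm{id}$ yields $Nx=0$). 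Stationarity of $\pi$ for $P^*$ follows directly from part (1): $P^*\pi=N^*M^*\pi=N^*\hat\pi=\pi$.

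The only step requiring a little care is the detailed-balance claim, and it is the real content of the choice of $N$. I would expand both sides in full: $Q_\pi P = Q_\pi M Q_{\hat\pi}^{-1}M^*Q_\pi$ and $P^*Q_\pi = N^*M^*Q_\pi = Q_\pi M Q_{\hat\pi}^{-1}M^*Q_\pi$, and observe that the two expressions coincide, so $P^*Q_\pi=Q_\pi P$, which is precisely Definition~\ref{def:DetailedBalance} for $K=P$. Equivalently, writing $B=Q_\pi M$ one sees $P^*Q_\pi=B\,Q_{\hat\pi}^{-1}B^*$, a symmetric sandwich because $Q_{\hat\pi}^{-1}$ is a symmetric (diagonal) matrix; self-adjointness of $P^*Q_\pi$ is exactly detailed balance. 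This symmetric form is where the specific generalized inverse $N=Q_{\hat\pi}^{-1}M^*Q_\pi$, rather than some other left inverse of $M$, pays off, so I would flag it as the conceptual crux even though no computation in the proof is difficult.
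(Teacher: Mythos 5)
Your proposal is correct and follows essentially the same route as the paper: both parts hinge on the identity $Q_{\hat\pi}=M^{*}Q_{\pi}M$ from Lemma~\ref{lem:DetMarkovMatrix} (you apply it directly to $NM$, the paper dualizes via $M^{*}N^{*}=\mathrm{id}_{\hat X^{*}}$, a cosmetic difference), and your detailed-balance computation $Q_{\pi}P=Q_{\pi}MQ_{\hat\pi}^{-1}M^{*}Q_{\pi}=P^{*}Q_{\pi}$ is verbatim the paper's. Your extra spelling-out of $\mathrm{Range}(P)=\mathrm{Range}(M)$ and $\mathrm{Ker}(P)=\mathrm{Ker}(N)$, which the paper compresses into ``$N$ is surjective and $M$ is injective,'' is a welcome addition rather than a deviation.
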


\begin{proof}
Clearly $N$ is nonnegative and $N\one=Q_{\hat{\pi}}^{-1}M^{*}Q_{\pi}\one=Q_{\hat{\pi}}^{-1}M^{*}\pi=Q_{\hat{\pi}}^{-1}\hat{\pi}=\hat{\one}$.
Hence, $N$ is a Markov matrix. Moreover we have $N^{*}\hat{\pi}=Q_{\pi}MQ_{\hat{\pi}}^{-1}\hat{\pi}=Q_{\pi}M\hat{\one}=\pi$.

For the second claim, we use Lemma \ref{lem:DetMarkovMatrix}, which
implies that $M^{*}N^{*}=M^{*}Q_{\pi}MQ_{\hat{\pi}}^{-1}=\mathrm{id}_{\hat{X}^{*}}$.
Hence, $NM=\mathrm{id}_{\hat{X}}$ and $P=MN$ is a projection. This
provides the decomposition of $X$ since $N$ is surjective and $M$
is injective. Since it is the composition of Markov matrices $P$
is again a Markov matrix and obviously $P^{*}\pi=\pi$. To see that
$P$ satisfies detailed balance, we observe
\[
Q_{\pi}P=Q_{\pi}MN=Q_{\pi}MQ_{\hat{\pi}}^{-1}M^{*}Q_{\pi}=N^{*}M^{*}Q_{\pi}=P^{*}Q_{\pi}\,.
\]
\end{proof}

\begin{rem}
We remark that in \cite{Step13IMOMDT} inverse operators for general
Markov operators have been introduced and their relation to the direction
of time has been investigated.
\end{rem}

Finally, we investigate the connection between the ``inverse'' operator
$N$ and the Penrose-Moore inverse of linear algebra. First, we see
that $N:X\to\hat{X}$ is a pseudo inverse of $M:\hat{X}\to X$, because
$MNM=M$ and $NMN=N$ by Proposition \ref{prop:CoarseGrainingProperties}.
Recall that for an injective $M:\hat{X}\to X$, the Penrose-Moore
inverse of $M$ can be defined by
\[
M^{+}=(M^{*}M)^{-1}M^{*}.
\]
The next proposition shows, that this formula provides exactly $N$,
if the adjoint operator $M^{*}$ is understood in the space $\L^{2}(\pi)$.
In particular, if $\pi=\frac{1}{N}(1,\cdots,1)^{\mathrm{T}},$ we
have that $N=M^{+}$. To see this, we define the $\L^{2}(\pi)$-inner
product in $X$ by 
\[
(x,y)_{\pi}:=\langle x,Q_{\pi}y\rangle=\langle x\cdot y,\pi\rangle\,.
\]

\begin{prop}
The reconstruction operator $N=Q_{\hat{\pi}}^{-1}M^{*}Q_{\pi}$ is
the $\L^{2}(\pi)$-adjoint of $M$. In particular, $N$ is generalized
Penrose-Moore inverse of $M$ in $\L^{2}(\pi)$.
\end{prop}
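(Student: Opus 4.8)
The plan is to show two things: first that $N$ is the $\L^2(\pi)$-adjoint of $M$, and second that the defining equations of the generalized Penrose-Moore inverse in $\L^2(\pi)$ are satisfied. I would start by fixing the relevant inner products. On the fine space $X$ we already have $(x,y)_\pi = \langle x, Q_\pi y\rangle$, and on the coarse space $\hat X$ the natural inner product is $(\hat x, \hat y)_{\hat\pi} = \langle \hat x, Q_{\hat\pi}\hat y\rangle$, where $\hat\pi = M^*\pi$. The adjoint of $M:\hat X\to X$ with respect to these two inner products is by definition the operator $M^\dagger:X\to\hat X$ satisfying $(M\hat x, y)_\pi = (\hat x, M^\dagger y)_{\hat\pi}$ for all $\hat x\in\hat X$, $y\in X$.

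The main computation is then essentially one line. Starting from the left-hand side, I would write
\begin{equation}
(M\hat x, y)_\pi = \langle M\hat x, Q_\pi y\rangle = \langle \hat x, M^* Q_\pi y\rangle,
\end{equation}
using only the dual pairing and the definition of $M^*$. To match the right-hand side $(\hat x, M^\dagger y)_{\hat\pi} = \langle \hat x, Q_{\hat\pi} M^\dagger y\rangle$, I want to factor $M^* Q_\pi$ through $Q_{\hat\pi}$. This is exactly where the definition $N = Q_{\hat\pi}^{-1}M^*Q_\pi$ enters: we have $Q_{\hat\pi} N = M^* Q_\pi$, so $M^*Q_\pi y = Q_{\hat\pi} N y$ and hence $(M\hat x, y)_\pi = \langle \hat x, Q_{\hat\pi} N y\rangle = (\hat x, Ny)_{\hat\pi}$. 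This identifies $N$ as the $\L^2(\pi)$-adjoint $M^\dagger$.

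For the second claim, I would invoke the pseudo-inverse relations already established in the text, namely $MNM = M$ and $NMN = N$ from Proposition \ref{prop:CoarseGrainingProperties}, together with the two projection conditions. Since $NM = \mathrm{id}_{\hat X}$ and $P = MN$ is a projection, the operator $P$ is automatically self-adjoint in $\L^2(\pi)$: the detailed balance identity $Q_\pi P = P^* Q_\pi$ from the same proposition says precisely that $(Px, y)_\pi = \langle Px, Q_\pi y\rangle = \langle x, P^*Q_\pi y\rangle = \langle x, Q_\pi Py\rangle = (x, Py)_\pi$. Likewise $NM = \mathrm{id}_{\hat X}$ is trivially self-adjoint on $\hat X$. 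Together with $MNM=M$ and $NMN=N$, these are exactly the four Penrose-Moore conditions (with adjoints taken in the respective $\L^2$ inner products), so $N = M^+$ in the generalized sense.

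I do not expect a genuine obstacle here, since all the structural inputs are already available from Lemma \ref{lem:DetMarkovMatrix} and Proposition \ref{prop:CoarseGrainingProperties}; the only point requiring care is bookkeeping of which inner product lives on which space, i.e. keeping $Q_\pi$ on $X$ and $Q_{\hat\pi}$ on $\hat X$ straight, and making sure that the notion of ``adjoint'' in the Penrose-Moore conditions is consistently interpreted in $\L^2(\pi)$ rather than in the Euclidean pairing. The substantive content is entirely captured by the identity $Q_{\hat\pi} N = M^* Q_\pi$, which is just a rearrangement of the definition of $N$.
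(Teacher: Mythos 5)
Your adjointness computation coincides verbatim with the paper's proof, whose entire content is the chain $(M\hat{x},y)_{\pi}=\langle\hat{x},M^{*}Q_{\pi}y\rangle=\langle\hat{x},Q_{\hat{\pi}}Ny\rangle=(\hat{x},Ny)_{\hat{\pi}}$, resting, exactly as you say, on the rearrangement $Q_{\hat{\pi}}N=M^{*}Q_{\pi}$ of the definition of $N$. Where you take a genuinely different route is the ``in particular'' clause. The paper handles it through the closed-form expression for the pseudo-inverse of an \emph{injective} operator: it recalls $M^{+}=(M^{*}M)^{-1}M^{*}$ with the adjoint read in $\L^{2}(\pi)$, so that the adjointness result just proved, combined with $NM=\mathrm{id}_{\hat{X}}$ from Proposition \ref{prop:CoarseGrainingProperties}, yields $M^{+}=(NM)^{-1}N=N$ in one stroke. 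You instead verify the four Moore--Penrose axioms directly: $MNM=M$ and $NMN=N$ (both available from Proposition \ref{prop:CoarseGrainingProperties}), self-adjointness of $NM=\mathrm{id}_{\hat{X}}$, and self-adjointness of $P=MN$ in $\L^{2}(\pi)$, which you correctly extract from the detailed-balance identity $Q_{\pi}P=P^{*}Q_{\pi}$ established there. Both arguments are sound. The paper's is shorter because it exploits injectivity of $M$ so that $(M^{\dagger}M)^{-1}=(NM)^{-1}=\mathrm{id}_{\hat{X}}$ trivially; yours is more self-contained and slightly more general, since it never needs the invertibility of $M^{\dagger}M$ as an a priori input and it makes explicit in which weighted inner product each symmetry axiom is to be read --- precisely the bookkeeping issue you flag. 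One pedantic point: the fourth axiom asks for self-adjointness of $NM$ with respect to $(\cdot,\cdot)_{\hat{\pi}}$ on $\hat{X}$, not $\L^{2}(\pi)$ on $X$; since $NM$ is the identity this is immediate in any inner product, as you note, but the statement should name the $\hat{\pi}$-weighted pairing to keep the two spaces straight.
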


\begin{proof}
We have that
\[
(M\hat{x},y)_{\pi}=\langle\hat{x},M^{*}Q_{\pi}y\rangle=\langle\hat{x},Q_{\hat{\pi}}Ny\rangle=(\hat{x},Ny)_{\hat{\pi}}\,.
\]
\end{proof}
We note that the notion of $\textit{detailed balance}$ from Definition
\ref{def:DetailedBalance} means that the Markov matrix $K$ is symmetric
in $\L^{2}(\pi)$.

\subsection{Example\label{subsec:Example}}

For an example, we consider $Z=\left\{ 1,2,3\right\} $ and $\hat{Z}=\{\hat{1},\hat{2}\}$
and define $\phi(1)=\hat{1}$, $\phi(2)=\phi(3)=\hat{2}$. In matrix
representation, the coarse-graining operator has the form
\[
M=\begin{pmatrix}1\\
 & 1\\
 & 1
\end{pmatrix},\quad M^{*}=\begin{pmatrix}1\\
 & 1 & 1
\end{pmatrix}\,.
\]
Setting $\pi=(\pi_{1},\pi_{2},\pi_{3})^{\mathrm{T}}$ we obtain $\hat{\pi}=(\pi_{1},\pi_{2}+\pi_{3})^{\mathrm{T}}$,
and hence
\[
N=\begin{pmatrix}1\\
 & \frac{\pi_{2}}{\pi_{2}+\pi_{3}} & \frac{\pi_{3}}{\pi_{2}+\pi_{3}}
\end{pmatrix}\,,\quad P=MN=\begin{pmatrix}1\\
 & \frac{\pi_{2}}{\pi_{2}+\pi_{3}} & \frac{\pi_{3}}{\pi_{2}+\pi_{3}}\\
 & \frac{\pi_{2}}{\pi_{2}+\pi_{3}} & \frac{\pi_{3}}{\pi_{2}+\pi_{3}}
\end{pmatrix}\,.
\]

\subsection{Coarse-graining for Markov matrices}

Let a Markov matrix $K:X\to X$ be given. We assume that its adjoint
$K^{*}$ has a unique invariant measure $\pi$, i.e. $K^{*}\pi=\pi$.
We define the coarse-grained Markov matrix $\hat{K}$ by contracting
$K$ via

\[
\hat{K}=NKM:\hat{X}\to\hat{X}.
\]
The next proposition shows that $\hat{K}$ can indeed be understood
as a coarse-grained version of $K$.
\begin{thm}
\label{thm:Coarse-grainingMarkovMatrix}Let a Markov matrix $K:X\to X$
with an invariant measure $\pi$ be given. Let $M$ be a deterministic
Markov matrix, and reconstruction operator $N$ be defined by \eqref{eq:DefinitionN}.
Let $\hat{K}:=NKM:\hat{X}\to\hat{X}$. Then, we have
\begin{enumerate}
\item $\hat{K}$ is a Markov matrix on $\hat{X}$.
\item $\hat{K}^{*}$ has $\hat{\pi}$ as ita invariant measure.
\item Define the Markov chain $p_{k+1}=K^{*}p_{k}$, $p_{0}\in X^{*}$.
If there is an equilibration of the form $p_{k}=N^{*}\hat{p}_{k}$
for $\hat{p}_{k}\in\hat{X}$ and all $k\geq0$\textcolor{blue}{, }then
the probability vectors $\hat{p}_{k}$ satisfy the coarse-grained
Markov chain $\hat{p}_{k}=\hat{K}^{*}\hat{p}_{k-1}$.
\item If $K$ satisfies the detailed balance condition with respect to $\pi$,
then does $\hat{K}$ with respect to $\hat{\pi}$.
\end{enumerate}
\end{thm}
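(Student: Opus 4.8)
The plan is to establish all four assertions by purely operator-algebraic manipulations, using only the identities already recorded in Lemma~\ref{lem:DetMarkovMatrix} and Proposition~\ref{prop:CoarseGrainingProperties} together with the defining relations $M\hat{\one}=\one$, $K\one=\one$, $K^{*}\pi=\pi$, $\hat{\pi}=M^{*}\pi$, and $N^{*}\hat{\pi}=\pi$. For the first claim I would observe that $\hat{K}=NKM$ is a composition of three Markov matrices ($N$ is Markov by Proposition~\ref{prop:CoarseGrainingProperties}(1), $K$ by hypothesis, $M$ by assumption), so $\hat{K}$ is nonnegative, and the row-sum normalization follows from $\hat{K}\hat{\one}=NKM\hat{\one}=NK\one=N\one=\hat{\one}$. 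For the second claim I would dualize to $\hat{K}^{*}=M^{*}K^{*}N^{*}$ and compute directly
\[
\hat{K}^{*}\hat{\pi}=M^{*}K^{*}N^{*}\hat{\pi}=M^{*}K^{*}\pi=M^{*}\pi=\hat{\pi},
\]
invoking $N^{*}\hat{\pi}=\pi$, then $K^{*}\pi=\pi$, and finally $\hat{\pi}=M^{*}\pi$.

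For the third claim, the decisive ingredient is $M^{*}N^{*}=\mathrm{id}_{\hat{X}^{*}}$, which is the dual of $NM=\mathrm{id}_{\hat{X}}$ from Proposition~\ref{prop:CoarseGrainingProperties}(2). Inserting the assumed equilibration $p_{k}=N^{*}\hat{p}_{k}$ into the chain relation $p_{k}=K^{*}p_{k-1}$ yields $N^{*}\hat{p}_{k}=K^{*}N^{*}\hat{p}_{k-1}$, and applying $M^{*}$ on the left collapses the left-hand side to $\hat{p}_{k}$ while turning the right-hand side into $M^{*}K^{*}N^{*}\hat{p}_{k-1}=\hat{K}^{*}\hat{p}_{k-1}$.

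For the fourth claim I would unwind the definitions $N^{*}=Q_{\pi}MQ_{\hat{\pi}}^{-1}$ and $N=Q_{\hat{\pi}}^{-1}M^{*}Q_{\pi}$ and use the detailed balance hypothesis $K^{*}Q_{\pi}=Q_{\pi}K$. Computing the two sides separately,
\[
\hat{K}^{*}Q_{\hat{\pi}}=M^{*}K^{*}Q_{\pi}MQ_{\hat{\pi}}^{-1}Q_{\hat{\pi}}=M^{*}K^{*}Q_{\pi}M=M^{*}Q_{\pi}KM
\]
and
\[
Q_{\hat{\pi}}\hat{K}=Q_{\hat{\pi}}Q_{\hat{\pi}}^{-1}M^{*}Q_{\pi}KM=M^{*}Q_{\pi}KM,
\]
so both equal $M^{*}Q_{\pi}KM$, which is exactly the detailed balance condition $\hat{K}^{*}Q_{\hat{\pi}}=Q_{\hat{\pi}}\hat{K}$ for $\hat{K}$ with respect to $\hat{\pi}$.

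None of the steps is genuinely hard; the only place demanding care is the fourth claim, where the cancellation of the diagonal factors $Q_{\pi}$ and $Q_{\hat{\pi}}^{-1}$ must be tracked correctly and the detailed balance relation inserted at the right moment. Everything else reduces to substituting the commuting-diagram identities of the previous subsection.
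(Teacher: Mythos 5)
Your proposal is correct and follows essentially the same route as the paper's proof: composition of Markov matrices for claim (1), the chain $\hat{K}^{*}\hat{\pi}=M^{*}K^{*}N^{*}\hat{\pi}=M^{*}K^{*}\pi=\hat{\pi}$ for claim (2), the identity $M^{*}N^{*}=\mathrm{id}_{\hat{X}^{*}}$ for claim (3), and the relations $Q_{\hat{\pi}}N=M^{*}Q_{\pi}$, $Q_{\pi}M=N^{*}Q_{\hat{\pi}}$ combined with detailed balance for claim (4). The only cosmetic differences are that you verify $\hat{K}\hat{\one}=\hat{\one}$ explicitly and organize the detailed-balance step as a meet-in-the-middle computation (both sides equal $M^{*}Q_{\pi}KM$) rather than a single chain of equalities.
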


Equilibration $p_{k}=N^{*}\hat{p}_{k}$ means that the densities of
$p_{k}$ and $\hat{p}_{k}$ with respect to $\pi$ or $\hat{\pi}$,
respectively are equilibrated, i.e.
\[
p_{k}=Q_{\pi}MQ_{\hat{\pi}}^{-1}\hat{p}_{k}\quad\Leftrightarrow\quad\rho_{k}=M\hat{\rho}_{k},
\]
where $\rho_{k}=Q_{\pi}^{-1}p_{n}$, $\hat{\rho}_{k}=Q_{\hat{\pi}}^{-1}\hat{\rho}_{k}$.
In particular, this makes clear why $\hat{K}$ is the natural coarse-graining
Markov matrix of $K$.
\begin{proof}
Since $\hat{K}$ is the composition of Markov matrices it is itself
a Markov matrix. Moreover, we see that $\hat{\pi}$ is the invariant
measure of $\hat{K}$, because
\[
\hat{K}^{*}\hat{\pi}=M^{*}K^{*}N^{*}\hat{\pi}=M^{*}K^{*}\pi=M^{*}\pi=\hat{\pi}\,.
\]
Considering the Markov chain, let $p_{k}=K^{*}p_{k-1}$ be given.
Assuming that $p_{k}=N^{*}\hat{p}_{k}$, we conclude that $\hat{p}_{k}=M^{*}N^{*}\hat{p}_{k}=M^{*}p_{k}$.
Hence,
\[
\hat{p}_{k}=M^{*}p_{k}=M^{*}K^{*}p_{k-1}=M^{*}K^{*}N^{*}\hat{p}_{k-1}=\hat{K}^{*}\hat{p}_{k-1}\,.
\]
Finally, if $K$ satisfies detailed balance with respect to $\pi$
then $Q_{\pi}K=K^{*}Q_{\pi}$, and hence, we have
\begin{align*}
Q_{\hat{\pi}}\hat{K} & =Q_{\hat{\pi}}NKM=M^{*}Q_{\pi}KM=M^{*}K^{*}Q_{\pi}M=M^{*}K^{*}N^{*}Q_{\hat{\pi}}=\hat{K}^{*}Q_{\hat{\pi}}\ .
\end{align*}
\end{proof}
Conversely, if $\hat{p}_{k+1}=\hat{K}^{*}\hat{p}_{k}$, and $\hat{K}=NKM$,
then a direct computation shows that $p_{k}:=N^{*}\hat{p}_{k}$ solves
the projected Markov chain $p_{k+1}=P^{*}K^{*}P^{*}p_{k}$, which
in general is different to $p_{k+1}=K^{*}p_{k}$. In this sense, the
projection $P$ describes the information loss going from a coarser
system to a finer system.
\begin{rem}
Theorem \ref{thm:Coarse-grainingMarkovMatrix} naturally generalizes
to continuous time Markov processes (see e.g. \cite{MieSte19ECLRS}).
Let $p(t)=\e^{tA^{*}}p_{0}$ or equivalently $p$ solving $\dot{p}=A^{*}p$
be given, where $A$ is a Markov generator such that $\e^{tA}$ is
a semigroup of Markov matrices. If $p(t)=N^{*}\hat{p}(t)$, then the
coarse-grained probability vectors $\hat{p}$ satisfies the coarse-grained
Markov process $\dot{\hat{p}}=\hat{A}^{*}\hat{p}$, with the coarse-grained
Markov generator
\[
\hat{A}=NAM.
\]
\end{rem}

\section{Coarse-grained network\label{sec:Coarse-grained-network}}

Graph theoretically, a Markov matrix $K$ defines a directed graph
$G=G(V,E)$, with vertices given by the state space $\mathcal{Z}$
and edges between states $z_{i}$ and $z_{j}$ whenever $K_{ij}>0$.
An equivalence relation given by the coarse-graining map $\phi:\mathcal{Z}\to\hat{\mathcal{Z}}$
(i.e. $z_{i}\sim z_{j}$ iff $\phi(z_{i})=\phi(z_{j})$), defines
a partition of the the graph into blocks. By definition, these blocks
define the vertices of the so-called \textit{quotient graph} $\hat{G}$.
The edges in the quotient graph are defined as follows: two blocks
$B_{1}$ and $B_{2}$ are adjacent if some vertex in $B_{1}$ is adjacent
to some vertex in $B_{2}$ with respect to the edges in the starting
graph. That means if on $G=G(V,E)$ there is an equivalence relation
$\sim$ then $\hat{G}$ has vertices $\hat{V}=V/\sim$ and edges $\left\{ ([u]_{\sim},[v]_{\sim}):(u,v)\in E\right\} $.
In particular, the edges in each equivalence class (or block) vanish
(see e.g. \cite{Boll98MGT}).

Since we are interested in functions that are defined on the edges
(e.g. fluxes), we translate the above state-based coarse-graining
procedure to edges. For this it is convenient to introduce tensor
spaces.

\subsection{Coarse-graining in tensor spaces}

Naturally the space over the edges can be identified by matrices or
equivalently by the tensor product space $X\otimes X\simeq L(X^{*},X)$.
In particular, we use both formulations and switch between them whenever
necessary. In principle, also multi-tensor spaces can be considered
for example to capture cycles between several states. However, we
restrict ourselves to tensors of second order.

Importantly the tensor space $X\otimes X$ is consistent with the
concept of Markov matrices capturing positivity and duality. Positivity
is again defined pointwise. The constant 1-element in $X\otimes X$
is given by $\one\otimes\one$ and will be denoted by $\one_{\otimes}$.
The dual space of $L(X^{*},X)$ is given by $L(X^{*},X)^{*}\simeq L(X^{**},X^{*})\simeq L(X,X^{*})\simeq X^{*}\otimes X^{*}$.
The duality mapping is given by
\[
A\in L(X,X^{*})\mapsto\mathrm{Tr}(A^{*}\cdot)\in L(X^{*},X)^{*}.
\]
In the following we will denote the dual paring between $L(X^{*},X)$
and $L(X,X^{*})$ by 
\[
\lla A,B\rra:=\mathrm{Tr}(A^{*}B)=\mathrm{Tr}(AB^{*})=\sum_{i,j}A_{ij}B_{ij},
\]
which is just the usual dual paring by pointwise multiplication if
the matrices are understood as $n\times n$-vectors.

We may also define multiplication operators with elements of the dual
space. For a given matrix $m\in L(X,X^{*})\simeq X^{*}\otimes X^{*}$
we define the (diagonal) multiplication operator by
\[
Q_{m}:L(X^{*},X)\rightarrow L(X,X^{*}),\quad Q_{m}b=\left(m_{ij}b_{ij}\right)_{ij}
\]
by pointwise multiplication. Clearly, we have that $Q_{m}\one_{\otimes}=m$.
Indeed, the target space of $Q_{m}$ makes sense which can been seen
from the following observation that $Q_{m}$ is symmetric:
\[
\lla c,Q_{m}b\rra=\sum_{i,j}m_{ij}b_{ij}c_{ij}=\sum_{i,j}m_{ij}c_{ij}b_{ij}=\lla b,Q_{m}c\rra.
\]
Of great importance for us is the element $m=Q_{\pi}K\in L(X,X^{*})\simeq X^{*}\otimes X^{*}$,
which can be understood as a weight function defined on the edges.

For the coarse-grained state space $\hat{X}$, we analogously define
$\hat{X}\otimes\hat{X}\simeq L(\hat{X}^{*},\hat{X})$ and $\hat{X}^{*}\otimes\hat{X}^{*}\simeq L(\hat{X},\hat{X}^{*})$
by replacing $X$ by $\hat{X}$. We define a coarse-graining operator
on $L(X^{*},X)$ by
\begin{align*}
\widetilde{M} & :L(\hat{X}^{*},\hat{X})\simeq\hat{X}\otimes\hat{X} & \to & \quad L(X^{*},X)\simeq X\otimes X\\
 & \qquad\hat{b} & \mapsto & \quad\widetilde{M}\hat{b}:=M\hat{b}M^{*}\,.
\end{align*}

\begin{prop}
\label{prop:CoarseGrainingFluxes}The operator $\widetilde{M}$ has
the following properties:
\begin{enumerate}
\item $\widetilde{M}$ is again a deterministic Markov operator.
\item The adjoint operator is given by
\[
\widetilde{M}^{*}:L(X,X^{*})\to L(\hat{X},\hat{X}^{*}),\quad\widetilde{M}^{*}b=M^{*}bM\,.
\]
\item Let $m:=Q_{\pi}K$. Then $\hat{m}:=\widetilde{M}^{*}m=Q_{\hat{\pi}}\hat{K}$.
\end{enumerate}
\end{prop}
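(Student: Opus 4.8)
The three claims are all about the tensor-lifted operator $\widetilde{M}\hat{b} = M\hat{b}M^*$, and the natural strategy is to reduce each one to the corresponding first-order fact about $M$ already established in Section \ref{sec:Coarse-graining}. The plan is to handle the claims in the order (2), (1), (3), since computing the adjoint first gives a clean formula that streamlines the verification of determinism and the final identity.

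For claim (2), I would compute $\widetilde{M}^*$ directly from the duality pairing $\lla\cdot,\cdot\rra = \mathrm{Tr}(A^*B)$. Writing $\lla b, \widetilde{M}\hat{b}\rra = \mathrm{Tr}(b^*M\hat{b}M^*)$ for $b\in L(X,X^*)$ and $\hat b\in L(\hat X^*,\hat X)$, I would use cyclicity of the trace to rearrange this into $\mathrm{Tr}((M^*bM)^*\hat{b})$, which by definition equals $\lla \widetilde{M}^*b, \hat{b}\rra$ with $\widetilde{M}^*b = M^*bM$. The one point to watch is that cyclic permutation under the trace must respect the source/target spaces so that the resulting factors compose correctly; this is routine once the spaces are tracked. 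For claim (1), determinism means $\widetilde{M}$ sends pure product states to pure product states and preserves the constant element and pointwise products. I would verify $\widetilde{M}\hat{\one}_\otimes = M\hat{\one}\hat{\one}^*M^* = \one\one^* = \one_\otimes$ using $M\hat{\one}=\one$, and establish multiplicativity $\widetilde{M}(\hat{b}\cdot\hat{c}) = \widetilde{M}\hat{b}\cdot\widetilde{M}\hat{c}$ by exploiting that $M$ is deterministic, i.e. $M$ acts by the pullback $(M\hat x)_i = \hat x_{\phi(i)}$, so that the $(i,j)$ entry of $M\hat{b}M^*$ is simply $\hat{b}_{\phi(i)\phi(j)}$ — a pure relabeling that visibly commutes with pointwise multiplication.

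The substance of the proposition is claim (3), and this is where I expect the main work. Starting from $\hat{m} = \widetilde{M}^*m = M^*Q_\pi K M$, I would insert the identity $M^*Q_\pi = Q_{\hat\pi}N$ (equivalently $Q_{\hat\pi}N = M^*Q_\pi$, the reconstruction relation from \eqref{eq:DefinitionN}, using $N = Q_{\hat\pi}^{-1}M^*Q_\pi$), giving $\hat{m} = Q_{\hat\pi}N K M = Q_{\hat\pi}\hat{K}$, since $\hat K = NKM$ by definition. The clean form of this computation hinges on recognizing that the tensorial adjoint $\widetilde{M}^*$ sandwiches $m = Q_\pi K$ between $M^*$ and $M$ in exactly the pattern that factors through $N$ on the left while leaving one copy of $M$ on the right to build $\hat K$.

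The genuine obstacle, and the step I would flag, is the bookkeeping for claim (3): one must make sure the single $Q_\pi$ sitting inside $m$ is absorbed entirely into the left factor $M^*$ (producing $Q_{\hat\pi}N$) without needing a second diagonal operator on the right. This works precisely because $m = Q_\pi K$ carries the weight $\pi$ on only one side, matching the asymmetric way $N$ and $M$ enter $\hat K = NKM$; had $m$ been symmetrized (e.g. $Q_\pi^{1/2}KQ_\pi^{1/2}$ in a Hilbert-space treatment) the reduction would not close so neatly. I would therefore write out the substitution $M^*Q_\pi = Q_{\hat\pi}N$ explicitly and confirm that no spurious factor of $Q_{\hat\pi}$ or $Q_\pi$ remains, which is the crux that makes the edge-weight $\hat m = Q_{\hat\pi}\hat K$ consistent with the state-level coarse-graining.
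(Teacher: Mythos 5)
Your proposal is correct and follows essentially the same route as the paper: claim (2) is obtained by the identical trace-cyclicity computation, and claim (3) by exactly the substitution $M^{*}Q_{\pi}=Q_{\hat{\pi}}N$ you identify, so that $\widetilde{M}^{*}m=M^{*}Q_{\pi}KM=Q_{\hat{\pi}}NKM=Q_{\hat{\pi}}\hat{K}$, with your worry about a leftover diagonal factor resolving just as you predict. The only divergence is in claim (1): the paper verifies determinism by checking that the adjoint maps pure states to pure states, $\widetilde{M}^{*}\left(e_{i}\otimes e_{j}\right)=M^{*}e_{i}\otimes M^{*}e_{j}$ (which is why it establishes (2) first), whereas you use the multiplicativity characterization via the pullback formula $(M\hat{b}M^{*})_{ij}=\hat{b}_{\phi(i)\phi(j)}$; both characterizations of deterministic Markov operators are recorded in Section \ref{sec:Coarse-graining}, and your variant has the minor advantage of being self-contained, not needing the adjoint formula for this step.
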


\begin{proof}
Clearly, $\widetilde{M}$ is again positive. Moreover, it maps to
constant 1-function $\hat{\one}_{\otimes}=\hat{\one}\otimes\hat{\one}$
in $\hat{X}\otimes\hat{X}$ to the constant 1-function $\one_{\otimes}=\one\otimes\one$
in $X\otimes X$ because we have
\[
\widetilde{M}\left(\hat{\one}\otimes\hat{\one}^{*}\right)=M\left(\hat{\one}\otimes\hat{\one}\right)M^{*}=M\hat{\one}\otimes M\hat{\one}=\one\otimes\one.
\]
Hence, $\widetilde{M}$ is a Markov operator. To see that it is deterministic,
we use the representation of the adjoint operator $\widetilde{M}^{*}$
which is the second claim and proved below. Using that, we have $\widetilde{M}^{*}\left(e_{i}\otimes e_{j}\right)=M^{*}e_{i}\otimes M^{*}e_{j}$
which is again a pure state in $\hat{X}^{*}\otimes\hat{X}^{*}$

To compute the adjoint operator $\widetilde{M}^{*}$ we have for $\hat{b}\in L(\hat{X}^{*},\hat{X})$
and $c\in L(X,X^{*})$ that 
\begin{align*}
\lla\widetilde{M}\hat{b},c\rra & =\mathrm{Tr}\left(\left(M\hat{b}M^{*}\right)^{*}c\right)=\mathrm{Tr}\left(M\hat{b}^{*}M^{*}c\right)=\mathrm{Tr}\left(M^{*}cM\hat{b}^{*}\right)=\lla M^{*}cM,\hat{b}\rra,
\end{align*}
where we used that the trace is invariant under commuting matrices.
Hence, $\widetilde{M}^{*}c=M^{*}cM$.

For the last claim, we observe that $\hat{m}=\widetilde{M}^{*}m=M^{*}Q_{\pi}KM=Q_{\hat{\pi}}NKM=Q_{\hat{\pi}}\hat{K}$.
\end{proof}
Proposition \ref{prop:CoarseGrainingFluxes} shows that the (dual)
deterministic coarse-graining operator $\widetilde{M}^{*}$ maps the
weights $m=Q_{\pi}K$ on the coarse-grained weight $\hat{m}=Q_{\hat{\pi}}\hat{K}$.
This allows to define a reconstruction operator $\widetilde{N}$ as
the inverse operator of $\widetilde{M}$ with respect to $m$ as in
Section \ref{sec:Coarse-graining}. We define 
\begin{equation}
\widetilde{N}:L(X^{*},X)\to L(\hat{X}^{*},\hat{X}),\quad\widetilde{N}=Q_{\hat{m}}^{-1}\widetilde{M}^{*}Q_{m}\,.\label{eq:NTilde}
\end{equation}
Again, we have that $\widetilde{N}\one_{\otimes}=\hat{\one}_{\otimes}$.
Its adjoint (with respect to $\lla\cdot,\cdot\rra$) is given by
\[
\widetilde{N}^{*}:L(\hat{X},\hat{X}^{*})\to L(X,X^{*}),\quad\widetilde{N}^{*}=Q_{m}\widetilde{M}Q_{\hat{m}}^{-1}\,.
\]
Clearly, the operator $\widetilde{N}^{*}$ maps $\hat{m}$ to $m$.
The definition of reconstruction operator $\widetilde{N}$ has two
advantages. First it allows to define a coarse-grained incidence matrix
as we will see next. The incidence matrix will be crucial for estimating
Poincaré-type constants in Section \ref{sec:FunctiIn-PoincareConstants}.
Moreover, it can be used to reconstruct fluxes, which are functions
on edges (see Section \ref{sec:Detailed-balance-Markov-Matrices}).

\subsection{Coarse-graining of the incidence matrix}

The connection between $X$ and $X\otimes X$ is given by the incidence
matrix (or operator) $D:X\to L(X^{*},X)\approx X\otimes X$ for the
complete graph of the vertices $V$, which is (in coordinates) defined
by
\[
De_{i}=\sum_{j}\left(e_{i}\otimes e_{j}-e_{j}\otimes e_{i}\right)\in X\otimes X.
\]
We remark that this definition distinguish between outgoing and ingoing
edges.

To define the adjoint operator, we fix the canonical basis in the
dual space $e_{k}^{*}\in X^{*}$ with $\langle e_{k}^{*},e_{i}\rangle=\delta_{ik}$.
This also defines a basis $\left\{ e_{k}^{*}\otimes e_{l}^{*}\right\} _{k,l}$
in the tensor space $X^{*}\otimes X^{*}$ such that it holds $\lla e_{k}^{*}\otimes e_{l}^{*},e_{i}\otimes e_{j}\rra=\delta_{ijkl}$.
The adjoint operator $D^{*}$ is given by 
\begin{align}
D^{*} & :X^{*}\otimes X^{*}\to X^{*},\nonumber \\
D^{*}\left(e_{i}^{*}\otimes e_{j}^{*}\right)(e_{l}) & =\lla De_{l},e_{i}^{*}\otimes e_{j}^{*}\rra=\sum_{k}\lla e_{l}\otimes e_{k}-e_{k}\otimes e_{l},e_{i}^{*}\otimes e_{j}^{*}\rra\label{eq:Dstar}\\
 & =\sum_{k}\lla e_{l}\otimes e_{k}-e_{k}\otimes e_{l},e_{i}^{*}\otimes e_{j}^{*}\rra=\begin{cases}
0 & l\neq i,l\neq j\\
1 & l=i\\
-1 & l=j
\end{cases}\,,\nonumber 
\end{align}
whenever $i\neq j$, and otherwise it is zero.

The next result shows that the coarse-graining procedure is consistent
with the definition of the incidence matrix and the quotient graph.
\begin{thm}
\label{thm:CoarseGrainedIncidenceMatrix}Let $M:\hat{X}\rightarrow X$
be given as above, which is in local coordinates $M\hat{e}_{k}=\sum_{i\in\phi^{-1}(k)}e_{i}$.
Let $m\in X^{*}\otimes X^{*}$ be arbitrary and fixed, and let $\widetilde{N}$
be defined by \eqref{eq:NTilde}. The operator $\hat{D}:\hat{X}\rightarrow L(\hat{X}^{*},\hat{X})$
defined by 
\[
\hat{D}=\widetilde{N}DM:\hat{X}\to L(\hat{X}^{*},\hat{X})
\]
is an incidence matrix, i.e. we have 
\[
\hat{D}\hat{e}_{k}=\sum_{l}\left(\hat{e}_{k}\otimes\hat{e}_{l}-\hat{e}_{l}\otimes\hat{e}_{k}\right)\,.
\]
Moreover, it holds $\widetilde{M}\hat{D}=DM$.
\end{thm}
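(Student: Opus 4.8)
The plan is to show that $DM\hat e_{k}$ already lies in the range of the tensor coarse-graining operator $\widetilde{M}$. Once this is established, the reconstruction operator $\widetilde{N}$ acts as an \emph{exact} left inverse on this element, so the a priori $m$-dependent result collapses to the coarse incidence matrix and, crucially, becomes independent of the arbitrary weight $m$. Concretely, I would first record that $\widetilde{N}\widetilde{M}=\mathrm{id}$: since $\widetilde{M}$ is a deterministic Markov operator (Proposition \ref{prop:CoarseGrainingFluxes}) and $\hat m=\widetilde{M}^{*}m$ by definition, the tensor analogues of Lemma \ref{lem:DetMarkovMatrix} and Proposition \ref{prop:CoarseGrainingProperties} apply verbatim, giving $Q_{\hat m}=\widetilde{M}^{*}Q_{m}\widetilde{M}$ and hence $\widetilde{N}\widetilde{M}=Q_{\hat m}^{-1}\widetilde{M}^{*}Q_{m}\widetilde{M}=\mathrm{id}$ from the definition \eqref{eq:NTilde}.

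Next I would compute $DM\hat e_{k}$ explicitly. Using $M\hat e_{k}=\sum_{i\in\phi^{-1}(k)}e_{i}$, $\one=\sum_{j}e_{j}$, and linearity of $D$, the definition of the incidence matrix gives
\[
DM\hat e_{k}=\sum_{i\in\phi^{-1}(k)}\sum_{j}\big(e_{i}\otimes e_{j}-e_{j}\otimes e_{i}\big)=M\hat e_{k}\otimes\one-\one\otimes M\hat e_{k}.
\]
I would then exhibit the candidate pre-image $\hat c:=\sum_{l}\big(\hat e_{k}\otimes\hat e_{l}-\hat e_{l}\otimes\hat e_{k}\big)$, the incidence vector on the quotient graph. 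Using $\widetilde{M}(\hat e_{k}\otimes\hat e_{l})=M\hat e_{k}\otimes M\hat e_{l}$, bilinearity of the tensor product, and the Markov identity $\sum_{l}M\hat e_{l}=M\hat\one=\one$, a one-line computation yields
\[
\widetilde{M}\hat c=M\hat e_{k}\otimes\Big(\sum_{l}M\hat e_{l}\Big)-\Big(\sum_{l}M\hat e_{l}\Big)\otimes M\hat e_{k}=M\hat e_{k}\otimes\one-\one\otimes M\hat e_{k}=DM\hat e_{k},
\]
so $DM\hat e_{k}\in\mathrm{Range}(\widetilde{M})$.

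Combining the two steps finishes both assertions. Applying $\widetilde{N}$ and using $\widetilde{N}\widetilde{M}=\mathrm{id}$,
\[
\hat D\hat e_{k}=\widetilde{N}DM\hat e_{k}=\widetilde{N}\widetilde{M}\hat c=\hat c=\sum_{l}\big(\hat e_{k}\otimes\hat e_{l}-\hat e_{l}\otimes\hat e_{k}\big),
\]
which is the first claim, and then $\widetilde{M}\hat D\hat e_{k}=\widetilde{M}\hat c=DM\hat e_{k}$ for every $k$ gives $\widetilde{M}\hat D=DM$. The only real obstacle is the range-membership step, i.e.\ guessing and verifying that $DM\hat e_{k}$ equals $\widetilde{M}$ applied to the coarse incidence vector; everything else is mechanical. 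This step is also what conceptually explains the theorem's hypothesis that $m$ is arbitrary: $\widetilde{N}$ inverts $\widetilde{M}$ on its range regardless of the weight, so the reconstructed incidence matrix cannot depend on $m$.
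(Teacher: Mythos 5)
Your proof is correct, and it takes a genuinely different route from the paper's. The paper proves the formula for $\hat{D}\hat{e}_{k}$ by direct coordinate computation: it expands $\widetilde{M}^{*}Q_{m}DM\hat{e}_{k}$ as a double sum over the fibers of $\phi$, recognizes the coarse weights $\hat{m}_{kl}=\sum_{j\in\phi^{-1}(l)}\sum_{i\in\phi^{-1}(k)}m_{ij}$, and cancels them with $Q_{\hat{m}}^{-1}$, so the $m$-independence only emerges at the end of the calculation; it then verifies $\widetilde{M}\hat{D}=DM$ by a second, separate computation --- which is essentially your range-membership step $\widetilde{M}\hat{c}=M\hat{e}_{k}\otimes\one-\one\otimes M\hat{e}_{k}=DM\hat{e}_{k}$ for $\hat{c}=\hat{e}_{k}\otimes\hat{\one}-\hat{\one}\otimes\hat{e}_{k}$. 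You run the logic in the opposite order: you establish $DM\hat{e}_{k}=\widetilde{M}\hat{c}\in\mathrm{Range}(\widetilde{M})$ first and then, never touching the weights, collapse $\widetilde{N}DM\hat{e}_{k}=\widetilde{N}\widetilde{M}\hat{c}=\hat{c}$ via the left-inverse identity $\widetilde{N}\widetilde{M}=\mathrm{id}$, obtaining both claims from one computation plus one abstract lemma. That lemma is sound as you derive it: $\widetilde{M}$ is precisely the coarse-graining operator of the surjection $\phi\times\phi$ on $\mathcal{Z}\times\mathcal{Z}$ (Proposition \ref{prop:CoarseGrainingFluxes} confirms it is deterministic), so the argument of Lemma \ref{lem:DetMarkovMatrix} transfers and gives $Q_{\hat{m}}=\widetilde{M}^{*}Q_{m}\widetilde{M}$, whence $\widetilde{N}\widetilde{M}=Q_{\hat{m}}^{-1}\widetilde{M}^{*}Q_{m}\widetilde{M}=\mathrm{id}$. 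The one point you should spell out rather than calling ``verbatim'': Lemma \ref{lem:DetMarkovMatrix} is stated for a probability vector $\pi$, whereas $m$ is an arbitrary weight; the transfer is legitimate because that lemma's proof uses only the multiplicativity of a deterministic operator and never normalization, and positivity of $\hat{m}$ (needed so that $Q_{\hat{m}}^{-1}$, and hence $\widetilde{N}$, exists at all) is already implicit in the theorem's hypotheses. What your route buys is a structural explanation of the theorem's most striking feature --- $\hat{D}$ is independent of $m$ because the reconstruction operator inverts $\widetilde{M}$ on its range for \emph{every} admissible weight --- at the price of importing the tensor analogue of the Section \ref{sec:Coarse-graining} machinery; the paper's version is longer bookkeeping but entirely self-contained.
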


Remarkably, the form of $\hat{D}$ is independent of the $m\in X\otimes X$.
We note that the second claim does not follow immediately from $\hat{D}=\widetilde{N}DM$,
which would imply $\widetilde{M}\hat{D}=\widetilde{P}DM$ with the
projection $\widetilde{P}=\widetilde{M}\widetilde{N}$ on $X\otimes X$.
The relation $\widetilde{M}\hat{D}=DM$ is finer and provides that
the coarse-graining procedure is consistent with the definition of
the quotient graph.
\begin{proof}
For the proof, we compute $\hat{D}\hat{e}_{k}$ explicitly. We have
$M\hat{e}_{k}=\sum_{i\in\phi^{-1}(k)}e_{i}$ and hence, 
\[
DM\hat{e}_{k}=\sum_{j}M\hat{e}_{k}\otimes e_{j}-e_{j}\otimes M\hat{e}_{k}=\sum_{j}\sum_{i\in\phi^{-1}(k)}e_{i}\otimes e_{j}-e_{j}\otimes e_{i}\,.
\]
Moreover, we have that $\widetilde{N}=Q_{\hat{m}}^{-1}\widetilde{M}^{*}Q_{m}$.
To evaluate $\widetilde{N}\left(e_{i}\otimes e_{j}\right)$, we first
observe that $\widetilde{M}^{*}\left(e_{i}\otimes e_{j}\right)=\hat{e}_{\phi(i)}\otimes\hat{e}_{\phi(j)}$
which implies that 
\begin{align*}
\widetilde{M}^{*}Q_{m}DM\hat{e}_{k} & =\sum_{j}\sum_{i\in\phi^{-1}(k)}m_{ij}\widetilde{M}^{*}\left(e_{i}\otimes e_{j}\right)-m_{ji}\widetilde{M}^{*}\left(e_{j}\otimes e_{i}\right)\\
 & =\sum_{j}\sum_{i\in\phi^{-1}(k)}m_{ij}\hat{e}_{\phi(i)}\otimes\hat{e}_{\phi(j)}-m_{ji}\hat{e}_{\phi(j)}\otimes\hat{e}_{\phi(i)}\\
 & =\sum_{l_{j}}\sum_{j\in\phi^{-1}(l_{j})}\sum_{i\in\phi^{-1}(k)}m_{ij}\hat{e}_{\phi(i)}\otimes\hat{e}_{\phi(j)}-m_{ji}\hat{e}_{\phi(j)}\otimes\hat{e}_{\phi(i)}\\
 & =\sum_{l_{j}}\hat{m}_{kl_{j}}\hat{e}_{k}\otimes\hat{e}_{l_{j}}-\hat{m}_{l_{j}k}\hat{e}_{l_{j}}\otimes\hat{e}_{k},
\end{align*}
where we have used the definition of $\hat{m}=\widetilde{M}^{*}m$
given by $\hat{m}_{kl}=\sum_{j\in\phi^{-1}(l)}\sum_{i\in\phi^{-1}(k)}m_{ij}$.
Hence, we conclude that 
\[
\hat{D}\hat{e}_{k}=\widetilde{N}DM\hat{e}_{k}=Q_{\hat{m}}^{-1}\widetilde{M}^{*}Q_{m}DM\hat{e}_{k}=\sum_{l}\hat{e}_{k}\otimes\hat{e}_{l}-\hat{e}_{l}\otimes\hat{e}_{k}\ ,
\]
which is the desired formula.

Now we prove $\widetilde{M}\hat{D}=DM$ again by direct calculation.
We have $\widetilde{M}\left(\hat{e}_{k}\otimes\hat{e}_{l}\right)=M\hat{e}_{k}\otimes M\hat{e}_{l}$
and hence,
\[
\widetilde{M}\hat{D}\hat{e}_{k}=\sum_{l}M\hat{e}_{k}\otimes M\hat{e}_{l}-M\hat{e}_{l}\otimes M\hat{e}_{k}=\sum_{l}\sum_{j\in\phi^{-1}(l)}M\hat{e}_{k}\otimes e_{j}-e_{j}\otimes M\hat{e}_{k}=DM\hat{e}_{k},
\]
which we wanted to show.
\end{proof}
\begin{rem}
We finally remark that the coarse-graining procedure can also be applied
to undirected graphs. Introducing the space $X\odot X\simeq L_{\mathrm{sym}}(X^{*},X)$
containing the symmetric tensors (or matrices), the coarse-graining
operator $\widetilde{M}$ respect the symmetric structure. Moreover,
if $m=Q_{\pi}K$ is symmetric then also $\widetilde{N}$ maps into
symmetric tensors. In the following we will treat the case of symmetric
$m$ with more detail but we will not consider undirected graphs.
\end{rem}

\section{Detailed balance Markov matrices\label{sec:Detailed-balance-Markov-Matrices}}

A special situation occurs if the operator $K$ satisfies detailed
balance meaning that $m_{ij}=\pi_{i}K_{ij}=\pi_{j}K_{ji}=m_{ji}$.
We define the associated Markov generator $A$ by $A=K-\mathrm{id}$
and investigate the following evolution system in $X^{*}$:
\[
\dot{c}=A^{*}c\,.
\]
In the next lemma, we recall that assuming that $K$ (or equivalently
$A$) satisfies detailed balance, the system $\dot{c}=A^{*}c$ can
be written as a gradient flow expressed via the incidence operator
\begin{align*}
\dot{c} & =-D^{*}b,\\
b & =\frac{1}{2}Q_{m}D\rho,\\
\rho & =Q_{\pi}^{-1}c\ ,
\end{align*}
where the first equation is a continuity equation between the fluxes
$b$ and the concentrations $c$, the last equation defines the relative
densities $\rho$ of $c$ with respect to $\pi$ and the second equation
is the constitutive relation between the relative densities $\rho$
and the fluxes $b$, which uses the tensor valued diagonal operator
$Q_{m}$. Note that there is a factor $\tfrac{1}{2}$ because the
incidence operator $D$ counts every edge twice.
\begin{lem}
Let $K$ (or equivalently $A$) satisfy the detailed balance condition.
Then $A^{*}=-\frac{1}{2}D^{*}Q_{m}DQ_{\pi}^{-1}$.
\end{lem}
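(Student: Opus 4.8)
The plan is to verify the operator identity by evaluating the right-hand side $-\tfrac{1}{2}D^{*}Q_{m}DQ_{\pi}^{-1}$ on an arbitrary concentration $c\in X^{*}$ and matching it componentwise against $A^{*}c=K^{*}c-c$. Throughout I would use the bookkeeping $c=Q_{\pi}\rho$ with $\rho=Q_{\pi}^{-1}c$, so that $c_{i}=\pi_{i}\rho_{i}$, and the defining relation $m_{ij}=\pi_{i}K_{ij}$.

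First I would push $c$ through the chain of operators. Setting $\rho=Q_{\pi}^{-1}c$, the incidence operator yields $(D\rho)_{ij}=\rho_{i}-\rho_{j}$ directly from $De_{i}=\sum_{j}(e_{i}\otimes e_{j}-e_{j}\otimes e_{i})$, and the diagonal multiplication gives $b:=Q_{m}D\rho$ with $b_{ij}=m_{ij}(\rho_{i}-\rho_{j})$. Applying $D^{*}$ and reading off the explicit action on basis elements from \eqref{eq:Dstar}, namely $D^{*}(e_{i}^{*}\otimes e_{j}^{*})=e_{i}^{*}-e_{j}^{*}$ for $i\neq j$, the $l$-th component collapses to $(D^{*}b)_{l}=\sum_{j}(b_{lj}-b_{jl})$.

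At this point the detailed balance hypothesis enters decisively. Using $m_{lj}=m_{jl}$, the two summands combine as $b_{lj}-b_{jl}=m_{lj}\big[(\rho_{l}-\rho_{j})-(\rho_{j}-\rho_{l})\big]=2\,m_{lj}(\rho_{l}-\rho_{j})$, so that $(D^{*}b)_{l}=2\sum_{j}m_{lj}(\rho_{l}-\rho_{j})$ and hence $-\tfrac{1}{2}(D^{*}Q_{m}DQ_{\pi}^{-1}c)_{l}=\sum_{j}m_{lj}(\rho_{j}-\rho_{l})$. I expect this to be the only genuinely delicate point: the factor $\tfrac{1}{2}$ is precisely what is consumed here, and it is absorbed only because the symmetry of $m$ (not merely its definition) makes the two orientations of each edge contribute equally — this is the counterpart of the remark that $D$ traverses every edge twice.

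Finally I would match this against $A^{*}c$. Writing $(A^{*}c)_{l}=\sum_{i}K_{il}c_{i}-c_{l}=\sum_{i}m_{il}\rho_{i}-\pi_{l}\rho_{l}$ and again invoking $m_{il}=m_{li}$, the first sum rewrites as $\sum_{j}m_{lj}\rho_{j}$. The normalization $\sum_{j}m_{lj}=\pi_{l}\sum_{j}K_{lj}=\pi_{l}$, valid since $K$ is a Markov matrix, turns the remaining term into $\pi_{l}\rho_{l}=\rho_{l}\sum_{j}m_{lj}$, giving $(A^{*}c)_{l}=\sum_{j}m_{lj}(\rho_{j}-\rho_{l})$. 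This agrees with the right-hand side computed above, and since $c\in X^{*}$ was arbitrary, the operator identity $A^{*}=-\tfrac{1}{2}D^{*}Q_{m}DQ_{\pi}^{-1}$ follows.
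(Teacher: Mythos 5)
Your proof is correct and takes essentially the same route as the paper: a direct componentwise computation that pushes $\rho=Q_{\pi}^{-1}c$ through $D$ and $Q_{m}$, applies $D^{*}$ via the explicit formula \eqref{eq:Dstar}, and uses the symmetry $m_{lj}=m_{jl}$ to absorb the factor $\tfrac{1}{2}$ before matching against $(A^{*}c)_{l}$. The only cosmetic difference is that you invoke detailed balance earlier and use the row-sum identity $\sum_{j}m_{lj}=\pi_{l}$ rather than rewriting via $A_{il}$ as the paper does, which if anything treats the (vanishing) diagonal terms more cleanly.
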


\begin{proof}
By direct computation, we have for $\rho=\sum_{i}\rho_{i}e_{i}$ that
\[
D\rho=\sum_{i}\rho_{i}De_{i}=\sum_{i,j}\rho_{i}\left(e_{i}\otimes e_{j}-e_{j}\otimes e_{i}\right)\,.
\]
Hence, we get that $Q_{m}D\rho=\sum_{i,j}\rho_{i}m_{ij}e_{i}^{*}\otimes e_{j}^{*}-\rho_{i}m_{ji}e_{j}^{*}\otimes e_{i}^{*}$,
which implies
\[
-D^{*}Q_{m}D\rho=-\sum_{i,j}\rho_{i}m_{ij}D^{*}\left(e_{i}^{*}\otimes e_{j}^{*}\right)+\rho_{i}m_{ji}D^{*}\left(e_{j}^{*}\otimes e_{i}^{*}\right)\,.
\]
Evaluating both sides at $e_{l}$ and using the explicit formula \eqref{eq:Dstar},
we get that
\begin{align*}
\left(-D^{*}Q_{m}D\rho\right)e_{l} & =-\sum_{i,j}\rho_{i}m_{ij}D^{*}\left(e_{i}^{*}\otimes e_{j}^{*}\right)e_{l}+\sum_{i,j}\rho_{i}m_{ji}D^{*}\left(e_{j}^{*}\otimes e_{i}^{*}\right)e_{l}\\
 & =-\sum_{j}\rho_{l}m_{lj}+\sum_{i}\rho_{i}m_{il}+\sum_{i}\rho_{i}m_{li}-\sum_{j}\rho_{l}m_{jl}\\
 & =-\rho_{l}\sum_{j}\left(m_{lj}+m_{jl}\right)+\sum_{i}\rho_{i}\left(m_{il}+m_{li}\right)\,.
\end{align*}
Using that $m_{il}=m_{li}=\pi_{i}A_{il}=\pi_{l}A_{li}$ and $\rho_{i}=c_{i}/\pi_{i}$,
we get
\[
\left(-\frac{1}{2}D^{*}Q_{m}D\rho\right)e_{l}=-c_{l}\sum_{j}m_{lj}/\pi_{l}+\sum_{i}c_{i}m_{il}/\pi_{i}=-c_{l}\sum_{j}A_{lj}+\sum_{i}c_{i}A_{il},
\]
which implies that $A^{*}c=-\frac{1}{2}D^{*}Q_{m}DQ_{\pi}^{-1}c$
for all $c\in X$.
\end{proof}

\subsection{Coarse-graining for detailed balance Markov operators}

We are going to show that the above gradient flow decomposition is
also consistent with the coarse-graining procedure. Crucial for that
result is the relation $DM=\widetilde{M}\hat{D}$.
\begin{thm}
\label{thm:CgDetBalMarkovProcess}Let us assume that there is an equilibration
of the concentrations $c=N^{*}\hat{c}$. Then the coarse-grained concentrations
$\hat{c}$ solves the coarse-grained evolution equation of the form
\begin{align*}
\dot{\hat{c}} & =-\hat{D}^{*}\hat{b},\\
\hat{b} & =\frac{1}{2}Q_{\hat{m}}\hat{D}\hat{\rho},\\
\hat{\rho} & =Q_{\hat{\pi}}^{-1}\hat{c}\ .
\end{align*}
In particular, we have an equilibration of the fluxes $b=\widetilde{N}^{*}\hat{b}$.
\end{thm}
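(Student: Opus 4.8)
The plan is to reduce the whole statement to three ingredients that are already in hand: the density equilibration $\rho = M\hat\rho$ forced by the hypothesis $c = N^{*}\hat c$; the defining formulas for $\widetilde N$, $\widetilde N^{*}$ and $\hat D = \widetilde N D M$; and—decisively—the sharp relation $\widetilde M\hat D = DM$ from Theorem \ref{thm:CoarseGrainedIncidenceMatrix}. I would treat the three equations of the coarse-grained system, together with the flux equilibration, as purely algebraic identities among these operators, differentiating only the (constant) linear reductions.

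First I would unpack the equilibration hypothesis. Applying $M^{*}$ to $c = N^{*}\hat c$ and using $M^{*}N^{*} = \mathrm{id}_{\hat X^{*}}$ (from Proposition \ref{prop:CoarseGrainingProperties}) gives $\hat c = M^{*}c$, hence $\dot{\hat c} = M^{*}\dot c$ since $M^{*}$ is constant. Next, with $N^{*} = Q_{\pi}M Q_{\hat\pi}^{-1}$ and $\rho = Q_{\pi}^{-1}c$, $\hat\rho = Q_{\hat\pi}^{-1}\hat c$, a one-line computation yields $\rho = Q_{\pi}^{-1}N^{*}\hat c = M Q_{\hat\pi}^{-1}\hat c = M\hat\rho$. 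This is the density equilibration, and the defining line $\hat\rho = Q_{\hat\pi}^{-1}\hat c$ is then exactly the third equation of the claimed system.

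For the flux I would set $\hat b := \tfrac12 Q_{\hat m}\hat D\hat\rho$ and check two dual identities. The ``in particular'' statement $b = \widetilde N^{*}\hat b$ follows by inserting $\widetilde N^{*} = Q_{m}\widetilde M Q_{\hat m}^{-1}$, cancelling $Q_{\hat m}^{-1}Q_{\hat m}$, applying $\widetilde M\hat D = DM$ and using $\rho = M\hat\rho$, giving $\widetilde N^{*}\hat b = \tfrac12 Q_{m}\widetilde M\hat D\hat\rho = \tfrac12 Q_{m}D\rho = b$. Dually, $\widetilde M^{*}b = \hat b$: inserting $\rho = M\hat\rho$ into $b = \tfrac12 Q_{m}D\rho$, using $\widetilde M^{*}Q_{m} = Q_{\hat m}\widetilde N$ (the definition of $\widetilde N$) and then $\hat D = \widetilde N D M$, one gets $\widetilde M^{*}b = \tfrac12 Q_{\hat m}\widetilde N D M\hat\rho = \tfrac12 Q_{\hat m}\hat D\hat\rho = \hat b$.

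Finally, the continuity equation combines these: from $\dot{\hat c} = M^{*}\dot c = -M^{*}D^{*}b = -(DM)^{*}b$ together with $(DM)^{*} = (\widetilde M\hat D)^{*} = \hat D^{*}\widetilde M^{*}$, I obtain $\dot{\hat c} = -\hat D^{*}\widetilde M^{*}b = -\hat D^{*}\hat b$ via the identity $\widetilde M^{*}b = \hat b$ just established. The main obstacle, and the only place where the graph structure genuinely enters, is that the naive reconstruction $\hat D = \widetilde N D M$ alone would give only $\widetilde M\hat D = \widetilde P D M$ with the projection $\widetilde P = \widetilde M\widetilde N$, which is too coarse; the entire argument hinges on the finer relation $\widetilde M\hat D = DM$, so I must be careful to invoke that identity—and not merely the pseudo-inverse property $\widetilde N\widetilde M = \mathrm{id}$—precisely where the incidence operators are pushed through $\widetilde M$.
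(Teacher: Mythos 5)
Your proof is correct and follows essentially the same route as the paper: the same key identities ($M^{*}N^{*}=\mathrm{id}_{\hat{X}^{*}}$, $Q_{\pi}^{-1}N^{*}=MQ_{\hat{\pi}}^{-1}$, $\widetilde{M}^{*}Q_{m}=Q_{\hat{m}}\widetilde{N}$, and crucially $\widetilde{M}\hat{D}=DM$ rather than the mere pseudo-inverse property) drive the computation in both. The only cosmetic difference is that you define $\hat{b}:=\tfrac{1}{2}Q_{\hat{m}}\hat{D}\hat{\rho}$ and verify $\widetilde{M}^{*}b=\hat{b}$ and $\widetilde{N}^{*}\hat{b}=b$ separately, whereas the paper sets $\hat{b}:=\widetilde{M}^{*}b$ and derives the constitutive relation; your version even spells out the flux equilibration that the paper asserts only parenthetically.
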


\begin{proof}
Since $M^{*}N^{*}=\mathrm{id}_{X^{*}}$, we observe that $M^{*}c=\hat{c}$.
By Theorem \ref{thm:CoarseGrainedIncidenceMatrix} we have that $DM=\widetilde{M}\hat{D}$.
Hence, we obtain
\[
\dot{\hat{c}}=M^{*}\dot{c}=-M^{*}D^{*}b=-\hat{D}^{*}\widetilde{M}^{*}b,
\]
Defining $\hat{b}:=\widetilde{M}^{*}b$ (which implies that $b=\widetilde{N}^{*}\hat{b}$)
and using that $\widetilde{M}^{*}Q_{m}=Q_{\hat{m}}\widetilde{N}$
and $Q_{\pi}^{-1}N^{*}=MQ_{\hat{\pi}}^{-1}$, we get
\[
\hat{b}=\widetilde{M}^{*}b=\frac{1}{2}\widetilde{M}^{*}Q_{m}DQ_{\pi}^{-1}N^{*}\hat{c}=\frac{1}{2}Q_{\hat{m}}\widetilde{N}DMQ_{\hat{\pi}}^{-1}\hat{c}=\frac{1}{2}Q_{\hat{m}}\hat{D}\hat{\rho},
\]
where we have introduced the coarse-grained relative density $\hat{\rho}=Q_{\hat{\pi}}^{-1}\hat{c}$.
\end{proof}

\subsection{Flux reconstruction\label{subsec:Flux-reconstruction}}

Theorem \ref{thm:CgDetBalMarkovProcess} provides that an equilibration
of the fluxes necessarily occurs if concentrations equilibrate. In
practice, often the converse question arises, namely how fluxes on
the large graph can be reconstructed out of the coarse-grained quantities
like concentrations and fluxes. As one would guess $b=\widetilde{N}^{*}\hat{b}$
is not the desired flux because additional fluxes between coarse-grained
states are needed.

The next proposition provides an affirmative answer and shows that
for a given pair $(\hat{c},\hat{b})$ satisfying the continuity equation
$\dot{\hat{c}}=-\hat{D}^{*}\hat{b}$, then there is a reconstructed
flux $b$ such that $c=N^{*}\hat{c}$ solves $\dot{c}=-D^{*}b$ and
the reconstructed flux consists of two parts $b=b_{1}+b_{2}$ such
that $b_{1}=\widetilde{N}^{*}\hat{b}$ and $b_{2}\in\mathrm{Ker}\hat{D}^{*}\widetilde{M}^{*}$.
Moreover, $b$ depends linearly on $\hat{b}$.
\begin{prop}
Let $(\hat{c},\hat{b})$ satisfying the continuity equation $\dot{\hat{c}}=-\hat{D}^{*}\hat{b}$.
Define the reconstructed concentrations by $c=N^{*}\hat{c}$. Then
there is a reconstructed flux $b$, which solves the macroscopic continuity
equation $\dot{c}=-D^{*}b$ and $b$ is given by $b=b_{1}+b_{2}$
with $b_{1}=\widetilde{N}^{*}\hat{b}$ and $b_{2}\in\mathrm{Ker}\hat{D}^{*}\widetilde{M}^{*}$,
which depends linear on $\hat{b}$.
\end{prop}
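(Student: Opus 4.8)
We are given a pair $(\hat{c},\hat{b})$ satisfying $\dot{\hat{c}}=-\hat{D}^{*}\hat{b}$, and we want to produce a flux $b$ on the fine graph so that $c=N^{*}\hat{c}$ solves $\dot{c}=-D^{*}b$. The natural first guess is $b_{1}=\widetilde{N}^{*}\hat{b}$, and the plan is to check how close this gets us. Differentiating $c=N^{*}\hat{c}$ and using the fine continuity equation we would like, we compute $\dot{c}=N^{*}\dot{\hat{c}}=-N^{*}\hat{D}^{*}\hat{b}$, so the requirement $\dot{c}=-D^{*}b$ becomes the condition $D^{*}b=N^{*}\hat{D}^{*}\hat{b}$.

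\textbf{Reducing to a divergence-matching identity.}

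The heart of the proof is therefore to verify that $D^{*}\widetilde{N}^{*}\hat{b}=N^{*}\hat{D}^{*}\hat{b}$, i.e. that the first piece $b_{1}=\widetilde{N}^{*}\hat{b}$ already produces the correct divergence. The plan is to establish the operator identity $D^{*}\widetilde{N}^{*}=N^{*}\hat{D}^{*}$, which by dualizing is equivalent to $\widetilde{N}\,D=\hat{D}\,N$ (reading the adjoints with respect to the pairings $\langle\cdot,\cdot\rangle$ and $\lla\cdot,\cdot\rra$). I expect this to follow from Theorem \ref{thm:CoarseGrainedIncidenceMatrix}, where the relations $\hat{D}=\widetilde{N}DM$ and $\widetilde{M}\hat{D}=DM$ are already in hand; combining these with the pseudo-inverse relations $NM=\mathrm{id}_{\hat{X}}$ and $\widetilde{M}\widetilde{N}=\widetilde{P}$ from Proposition \ref{prop:CoarseGrainingProperties} and Section \ref{sec:Coarse-grained-network} should close the computation. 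If this compatibility identity holds exactly, then $b_{1}=\widetilde{N}^{*}\hat{b}$ already solves $\dot{c}=-D^{*}b_{1}$, and the only role of $b_{2}$ is to parametrize the freedom in the choice of flux.

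\textbf{The correction term $b_{2}$.}

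Once $b_{1}$ is shown to have the right divergence, any additional $b_{2}$ with $D^{*}b_{2}=0$ could be added without spoiling $\dot{c}=-D^{*}b$. The statement, however, asks specifically for $b_{2}\in\mathrm{Ker}(\hat{D}^{*}\widetilde{M}^{*})$, so the plan is to show that this kernel sits inside $\mathrm{Ker}\,D^{*}$, or more precisely that adding such a $b_{2}$ is consistent with both the fine continuity equation and with the coarse-graining $\widetilde{M}^{*}b=\hat{b}$. Using $\widetilde{M}^{*}Q_{m}=Q_{\hat{m}}\widetilde{N}$ and $\widetilde{M}^{*}\widetilde{N}^{*}=\mathrm{id}$, one checks $\widetilde{M}^{*}b_{1}=\hat{b}$, so $b_{2}$ must satisfy $\widetilde{M}^{*}b_{2}=0$; the condition $\hat{D}^{*}\widetilde{M}^{*}b_{2}=0$ is then automatic, and conversely the flux freedom consists exactly of those $b_{2}$ whose coarse-grained divergence vanishes. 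The linear dependence of $b$ on $\hat{b}$ is immediate since $b_{1}=\widetilde{N}^{*}\hat{b}$ is linear and $b_{2}$ can be taken to depend linearly (e.g. to be zero, or to solve a fixed linear inverse-divergence problem driven by $\hat{b}$).

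\textbf{Main obstacle.}

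The main obstacle I anticipate is not the algebra but pinning down the exact meaning of $b_{2}$: the proposition leaves the correction $b_{2}$ somewhat implicitly defined, and the real content is that the decomposition $b=b_{1}+b_{2}$ exhausts \emph{all} admissible reconstructed fluxes, with $b_{2}$ capturing the intra-cluster circulation that is invisible after coarse-graining. The careful step is verifying that $\mathrm{Ker}(\hat{D}^{*}\widetilde{M}^{*})$ is precisely the space of admissible corrections — neither too large (it must not break $D^{*}b=N^{*}\hat{D}^{*}\hat{b}$) nor too small (it must recover all fine fluxes that project to $\hat{b}$). This requires a clean interplay between the fine divergence $D^{*}$, the projection $\widetilde{P}=\widetilde{M}\widetilde{N}$, and the coarse divergence $\hat{D}^{*}$, which I expect to reduce — via Theorem \ref{thm:CoarseGrainedIncidenceMatrix} — to the single identity $\widetilde{N}D=\hat{D}N$ noted above.
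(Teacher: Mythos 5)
Your plan hinges on the operator identity $D^{*}\widetilde{N}^{*}=N^{*}\hat{D}^{*}$ (equivalently $\widetilde{N}D=\hat{D}N$), from which you conclude that $b_{1}=\widetilde{N}^{*}\hat{b}$ alone already has the correct divergence and that $b_{2}$ is pure gauge freedom with $D^{*}b_{2}=0$. This identity is \emph{false} in general, and it does not follow from the relations you cite: from $\hat{D}=\widetilde{N}DM$ and $NM=\mathrm{id}_{\hat{X}}$ one only gets $\hat{D}N=\widetilde{N}DMN=\widetilde{N}D\,P$ with $P=MN\neq\mathrm{id}_{X}$, and nothing forces $\widetilde{N}D(\mathrm{id}-P)=0$. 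Concretely, in the three-state example of Section \ref{subsec:Example} one computes that the second component of $D^{*}\widetilde{N}^{*}\hat{b}$ is $\frac{m_{21}}{m_{21}+m_{31}}\hat{b}_{21}-\frac{m_{12}}{m_{12}+m_{13}}\hat{b}_{12}$, whereas the second component of $N^{*}\hat{D}^{*}\hat{b}$ is $\frac{\pi_{2}}{\pi_{2}+\pi_{3}}\bigl(\hat{b}_{21}-\hat{b}_{12}\bigr)$; these disagree for generic weights, even under detailed balance (where $\frac{m_{12}}{m_{12}+m_{13}}=\frac{K_{12}}{K_{12}+K_{13}}$ need not equal $\frac{\pi_{2}}{\pi_{2}+\pi_{3}}$). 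The paper flags exactly this point immediately before the proposition: $\widetilde{N}^{*}\hat{b}$ is \emph{not} the desired flux, because additional fluxes between coarse-grained states are needed. Your secondary claims inherit the error: dualizing $\widetilde{M}\hat{D}=DM$ gives $\hat{D}^{*}\widetilde{M}^{*}=M^{*}D^{*}$, so $\mathrm{Ker}(\hat{D}^{*}\widetilde{M}^{*})=\{b:M^{*}D^{*}b=0\}$, which is strictly larger than both $\mathrm{Ker}(D^{*})$ and $\mathrm{Ker}(\widetilde{M}^{*})$ — it consists of fluxes whose fine divergence merely sums to zero over each cluster — and the correction $b_{2}$ must in general carry nonzero divergence, namely $D^{*}b_{2}=x^{*}:=\bigl(N^{*}\hat{D}^{*}-D^{*}\widetilde{N}^{*}\bigr)\hat{b}$. (Your preliminary steps $\dot{c}=-N^{*}\hat{D}^{*}\hat{b}$ and $\widetilde{M}^{*}\widetilde{N}^{*}=\mathrm{id}$, hence $\widetilde{M}^{*}b_{1}=\hat{b}$, are correct.)

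The missing idea is to treat $x^{*}$ as a generally nonzero right-hand side and prove \emph{solvability} of the linear equation $D^{*}b_{2}=x^{*}$, which is the paper's route via Fredholm's alternative: one must verify $\langle x,x^{*}\rangle=0$ for all $x\in\mathrm{Ker}(D)$. For such $x$ the term coming from $D^{*}\widetilde{N}^{*}\hat{b}$ drops out since $Dx=0$, and writing $x=M\hat{x}$ (using the splitting $X=\mathrm{Range}(M)+\mathrm{Ker}(N)$ of Proposition \ref{prop:CoarseGrainingProperties}) one computes, with $\hat{b}=\widetilde{M}^{*}b_{1}$, $NM=\mathrm{id}_{\hat{X}}$ and the key relation $\widetilde{M}\hat{D}=DM$ of Theorem \ref{thm:CoarseGrainedIncidenceMatrix},
\[
\langle x,x^{*}\rangle=\lla\hat{D}NM\hat{x},\widetilde{M}^{*}b_{1}\rra=\lla\widetilde{M}\hat{D}\hat{x},b_{1}\rra=\lla DM\hat{x},b_{1}\rra=\lla Dx,b_{1}\rra=0\,.
\]
Membership $b_{2}\in\mathrm{Ker}(\hat{D}^{*}\widetilde{M}^{*})$ then follows directly from $\hat{D}^{*}\widetilde{M}^{*}b_{2}=M^{*}D^{*}b_{2}=M^{*}N^{*}\hat{D}^{*}\hat{b}-M^{*}D^{*}\widetilde{N}^{*}\hat{b}=\hat{D}^{*}\hat{b}-\hat{D}^{*}\hat{b}=0$, and the linear dependence of $b_{2}$ on $\hat{b}$ comes from solving the linear equation $D^{*}b_{2}=x^{*}$ with a fixed (pseudo)inverse of $D^{*}$ — the one place where your closing remark was on the right track. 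Note also that the proposition asserts only \emph{existence} of a reconstructed flux of the stated form, not that the decomposition exhausts all admissible fluxes, so the classification you list as the ``main obstacle'' is not actually required.
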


\begin{proof}
Let $b_{1}=\widetilde{N}^{*}\hat{b}$. Then we have to construct $b_{2}\in X^{*}\otimes X^{*}$
such that 
\[
\dot{c}=-D^{*}(b_{1}+b_{2})=-D^{*}(\widetilde{N}^{*}\hat{b}+b_{2}).
\]
Since $c=N^{*}\hat{c}$, which implies $\dot{c}=N^{*}\dot{\hat{c}}=-N^{*}\hat{D}^{*}\hat{b}$,
we get that $b_{2}$ has to satisfy the linear equation 
\begin{equation}
D^{*}b_{2}=\left(N^{*}\hat{D}^{*}-D^{*}\widetilde{N}^{*}\right)\hat{b}=:x^{*}\in X^{*}.\label{eq:Dstarb2=00003Dxstar}
\end{equation}
Using Fredholm's alternative, this equation is solvable if for all
$x\in\mathrm{Ker}(D)$ we have that $\langle x,x^{*}\rangle=0$.

So let $x\in\mathrm{Ker}(D)\subset X$. Hence, 
\[
\langle x,x^{*}\rangle=\langle x,\left(N^{*}\hat{D}^{*}-D^{*}\widetilde{N}^{*}\right)\hat{b}\rangle=\langle\hat{D}Nx,\hat{b}\rangle\,.
\]
Using Proposition \ref{prop:CoarseGrainingProperties}, we decompose
$x\in X=\mathrm{Ker}(N)+\mathrm{Range}(M)$. Hence, for proving that
$\langle x,x^{*}\rangle=0$, we may assume that $x\in\mathrm{Range}(M)$,
i.e. $M\hat{x}=x$. Using that $\hat{b}=\widetilde{M}^{*}b_{1}$ and
that $NM=\mathrm{id}_{\hat{X}}$, we compute
\[
\langle x,x^{*}\rangle=\langle\hat{D}NM\hat{x},\widetilde{M}^{*}b_{1}\rangle=\langle\widetilde{M}\hat{D}\hat{x},b_{1}\rangle=\langle DM\hat{x},b_{1}\rangle=\langle Dx,b_{1}\rangle=0,
\]
where we have used that $\widetilde{M}\hat{D}=DM$. Hence, there exists
$b_{2}$ such that $D^{*}b_{2}=\left(N^{*}\hat{D}^{*}-D^{*}\widetilde{N}^{*}\right)\hat{b}$,
which implies that $b=b_{1}+b_{2}$ solves $\dot{c}=-D^{*}b$. By
\eqref{eq:Dstarb2=00003Dxstar} we see that $b_{2}$ depends linearly
on $\hat{b}$.

To see that $b_{2}\in\mathrm{Ker}(\hat{D}^{*}\widetilde{M}^{*})$,
we observe that
\[
\hat{D}^{*}\widetilde{M}^{*}b_{2}=M^{*}D^{*}b_{2}=M^{*}\left(N^{*}\hat{D}^{*}-D^{*}\widetilde{N}^{*}\right)\hat{b}=\left(M^{*}N^{*}M^{*}D^{*}\widetilde{N}^{*}-M^{*}D^{*}\widetilde{N}^{*}\right)\hat{b}=0.
\]
\end{proof}
\begin{rem}
We note that the existence of $b_{2}$ as well as the linear dependence
on $\hat{b}$ as been used in \cite{Step20EDPCLRDS} to coarse-grain
fast-slow linear reaction-diffusion systems.%
\end{rem}

\section{Functional inequalities and Poincaré constants\label{sec:FunctiIn-PoincareConstants}}

In this section we apply the coarse-graining procedure to derive estimates
between functionals on $X$ and $\hat{X}$. First we observe the following
for the expectations
\[
\mathbb{E}_{\pi}(x):=\sum_{i\in\mathcal{Z}}\pi_{i}x_{i}=\langle x,\pi\rangle,\quad\mathbb{E}_{\hat{\pi}}(\hat{x}):=\sum_{j\in\hat{\mathcal{Z}}}\hat{\pi}_{j}\hat{x}_{j}=\langle\hat{x},\hat{\pi}\rangle\,.
\]

\begin{lem}
\label{lem:Expectation}If $\hat{x}=Nx$ or $x=M\hat{x}$, then $\mathbb{E}_{\pi}(x)=\mathbb{E}_{\hat{\pi}}(\hat{x})$.
\end{lem}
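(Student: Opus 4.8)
The plan is to prove the two implications separately, using the two key algebraic facts established earlier: $\hat{\pi} = M^*\pi$ (definition of $\hat\pi$) and the Markov-matrix property of $N$ together with $N^*\hat\pi = \pi$ from Proposition~\ref{prop:CoarseGrainingProperties}. The whole statement is really about the two equivalent ways of writing an expectation as a dual pairing, so everything reduces to moving the coarse-graining operators across the pairing $\langle\cdot,\cdot\rangle$ and invoking these adjoint relations.

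**First** I would treat the case $x = M\hat{x}$. Here I compute directly
\begin{align*}
\mathbb{E}_\pi(x) = \langle M\hat{x}, \pi\rangle = \langle \hat{x}, M^*\pi\rangle = \langle \hat{x}, \hat\pi\rangle = \mathbb{E}_{\hat\pi}(\hat{x}),
\end{align*}
where the middle step is just the definition of the adjoint and the third step uses $\hat\pi = M^*\pi$. This direction is immediate and uses nothing beyond the definition of $\hat\pi$.

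**Next** I would treat the case $\hat{x} = Nx$. The natural move is the symmetric one, pushing $N$ onto $\pi$ via its adjoint:
\begin{align*}
\mathbb{E}_{\hat\pi}(\hat{x}) = \langle Nx, \hat\pi\rangle = \langle x, N^*\hat\pi\rangle = \langle x, \pi\rangle = \mathbb{E}_\pi(x),
\end{align*}
where the third equality is exactly the relation $N^*\hat\pi = \pi$ from Proposition~\ref{prop:CoarseGrainingProperties}(1). Thus both implications land on the same conclusion through the complementary adjoint identities $M^*\pi = \hat\pi$ and $N^*\hat\pi = \pi$.

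**The only subtlety** — and it is minor — is that the two hypotheses ``$\hat x = Nx$'' and ``$x = M\hat x$'' are genuinely different conditions (they are not equivalent, since $N$ is not injective), so the lemma is really asserting that each condition \emph{separately} suffices. I would make clear that the proof does not assume any compatibility between the two, and that each implication stands on its own algebraic identity. No estimate or inequality is needed here, so there is no real obstacle; the statement is a direct consequence of the adjoint bookkeeping already set up, and serves mainly to record that expectations are preserved under both coarse-graining and reconstruction before the functional-inequality estimates that follow.
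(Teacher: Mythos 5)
Your proposal is correct and follows essentially the same adjoint-bookkeeping route as the paper: the case $\hat{x}=Nx$ is handled identically via $N^{*}\hat{\pi}=\pi$, and your direct computation $\langle M\hat{x},\pi\rangle=\langle\hat{x},M^{*}\pi\rangle=\langle\hat{x},\hat{\pi}\rangle$ for the case $x=M\hat{x}$ is a trivial variant of the paper's reduction, which instead observes that $x=M\hat{x}$ implies $Nx=NM\hat{x}=\hat{x}$ and falls back on the first case. Your closing remark that the two hypotheses are not equivalent is accurate (indeed $x=M\hat{x}$ implies $\hat{x}=Nx$ but not conversely, since $N$ is not injective) and harmless.
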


\begin{proof}
If $\hat{x}=Nx$, we have $\mathbb{E}_{\hat{\pi}}(\hat{x})=\langle\hat{\pi},\hat{x}\rangle=\langle\hat{\pi},Nx\rangle=\langle N^{*}\hat{\pi},x\rangle=\langle\pi,x\rangle=\mathbb{E}_{\pi}(x)$.
If $x=M\hat{x}$ we conclude that $Nx=\hat{x}$.
\end{proof}

\subsection{Coarse-graining for energy functionals}

For any strictly convex and non-negative function $\Phi:\R\rightarrow[0,\infty[$
we define the associated energy functional on $X$ by 
\[
\mathcal{E}_{\Phi}(x)=\mathbb{E}_{\pi}\Phi(x)-\Phi(\mathbb{E}_{\pi}x)=\langle\Phi(x),\pi\rangle-\Phi(\langle x,\pi\rangle),
\]
where for a vector $x\in X\simeq\R^{n}$ the function $\Phi(x)\in X$
is defined componentwise, i.e. $\Phi(x)_{i}=\Phi(x_{i})$. Note, that
the measure $\pi\in X^{*}$ is fixed and implicitly given in the definition
of $\mathcal{E}_{\Phi}$.
\begin{lem}
\label{lem:PropertiesEnergyFunctional}The functional $\mathcal{E}_{\Phi}$
is non-negative and its minimum is attained on constant vectors. Moreover,
we have for the rescaled function $\widetilde{\Phi}(r)=\Phi(r)+cr+d$
that $\cE_{\widetilde{\Phi}}=\cE_{\Phi}$.
\end{lem}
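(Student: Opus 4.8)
The plan is to treat the three assertions separately; all of them rest on the single fact that $\mathbb{E}_{\pi}(\cdot)=\langle\,\cdot\,,\pi\rangle$ is an expectation against a genuine probability vector, i.e. $\pi_{i}>0$ for all $i$ and $\sum_{i}\pi_{i}=1$. For the non-negativity I would simply invoke Jensen's inequality: since $\Phi$ is convex and $\mathbb{E}_{\pi}$ averages with weights $\pi_{i}\geq0$ summing to one, one has $\Phi(\mathbb{E}_{\pi}x)\leq\mathbb{E}_{\pi}\Phi(x)$, which is exactly the statement $\cE_{\Phi}(x)\geq0$.

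For the claim about the minimum I would first evaluate $\cE_{\Phi}$ on a constant vector $x=r\one$: here $\Phi(x)_{i}=\Phi(r)$ for every $i$, so $\mathbb{E}_{\pi}\Phi(x)=\Phi(r)\sum_{i}\pi_{i}=\Phi(r)$ and $\mathbb{E}_{\pi}x=r\sum_{i}\pi_{i}=r$, giving $\cE_{\Phi}(r\one)=\Phi(r)-\Phi(r)=0$. Combined with non-negativity, this shows that the minimal value $0$ is attained on constants. To see that constants are the \emph{only} minimizers I would invoke the equality case of Jensen: strict convexity of $\Phi$ forces the components $x_{i}$ to coincide on the support of $\pi$, and since $\pi$ is positive the support is all of $\mathcal{Z}$, so $x$ must be constant.

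For the affine invariance I would compute directly, using only $\sum_{i}\pi_{i}=1$. Writing $\widetilde{\Phi}(x)_{i}=\Phi(x_{i})+cx_{i}+d$ yields $\mathbb{E}_{\pi}\widetilde{\Phi}(x)=\mathbb{E}_{\pi}\Phi(x)+c\,\mathbb{E}_{\pi}x+d$, whereas the constant term satisfies $\widetilde{\Phi}(\mathbb{E}_{\pi}x)=\Phi(\mathbb{E}_{\pi}x)+c\,\mathbb{E}_{\pi}x+d$. Subtracting, the linear contributions $c\,\mathbb{E}_{\pi}x$ and the constants $d$ cancel pairwise, leaving exactly $\cE_{\Phi}(x)$.

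There is essentially no obstacle here; the computations are routine. The only point that requires genuine care is the equality case in Jensen's inequality, where one must use \textbf{positivity} of $\pi$ (not merely non-negativity) to upgrade ``constant on the support'' to ``constant on all of $\mathcal{Z}$.'' Both the cancellation in the third part and the normalization $\mathbb{E}_{\pi}(r\one)=r$ in the second part hinge on $\pi$ being a probability vector, which is assumed throughout the paper.
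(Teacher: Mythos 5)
Your proof is correct and takes essentially the same route as the paper: Jensen's inequality (with strict convexity) for non-negativity and the minimum on constants, and a direct computation using $\langle\one,\pi\rangle=1$ to cancel the affine terms. You merely spell out what the paper leaves implicit---the evaluation $\cE_{\Phi}(r\one)=0$ and the equality case of Jensen, where your observation that positivity of $\pi$ is needed to upgrade ``constant on the support'' to ``constant on all of $\mathcal{Z}$'' is a correct and worthwhile clarification.
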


\begin{proof}
The first claim follows directly be Jensen's inequality since $\Phi$
is strictly convex. Moreover, a direct computation shows that
\begin{align*}
\cE_{\widetilde{\Phi}}(x) & =\mathbb{E}_{\pi}\widetilde{\Phi}(x)-\widetilde{\Phi}(\mathbb{E}_{\pi}x)=\langle\Phi(x)+cx+d,\pi\rangle-\Phi(\mathbb{E}_{\pi}x)-c\,\mathbb{E}_{\pi}x-d=\\
 & =\langle\Phi(x),\pi\rangle+c\langle x,\pi\rangle+d\langle\one,\pi\rangle-\Phi(\mathbb{E}_{\pi}x)-c\,\mathbb{E}_{\pi}x-d=\cE_{\Phi}(x)\,.
\end{align*}
\end{proof}
Typical examples for the function $\Phi$ are:
\begin{enumerate}
\item $\Phi(r)=\frac{1}{2}r^{2}$. Then $\mathcal{E}_{\Phi}(x)=\frac{1}{2}\left(\langle x^{2},\pi\rangle-\langle x,\pi\rangle^{2}\right)$
corresponds to the quadratic energy or statistical variance.
\item $\Phi(r)=r\log r-r+1$. Then $\cE_{\Phi}$ corresponds to the free
energy of Boltzmann type, which will be denoted by $\mathrm{Ent}_{\pi}$
in the following.
\end{enumerate}
\begin{rem}
There are several remarks in order.
\begin{enumerate}
\item Often functionals of the form $\cE^{*}(p)=\langle\Psi(p/\pi),\pi\rangle$
as relative energies or entropies are considered. In contrast to $\cE_{\Phi}$,
which is defined on $X$, functionals of the latter form are defined
on probability vectors as elements of the dual space $X^{*}$. However,
they are related via the Legendre transform. To see this, forgetting
about the normalization term $-\Phi(\mathbb{E}_{\pi}x)$, the Legendre
transform of the functional $\widetilde{\cE_{\Phi}}$, $\widetilde{\cE_{\Phi}}(x):=\langle\Phi(x),\pi\rangle$
is given by 
\[
\widetilde{\cE_{\Phi}}^{*}(p)=\sup_{x\in X}\left(\langle p,x\rangle-\widetilde{\cE_{\Phi}}(x)\right)=\sup_{x\in X}\left(\langle p,x\rangle-\langle\Phi(x),\pi\rangle\right).
\]
Introducing the relative density $g$ of $p$ with respect to the
positive probability vector $\pi$, we get that 
\begin{align*}
\widetilde{\cE_{\Phi}}^{*}(p) & =\sup_{x\in X}\left(\langle g\pi,x\rangle-\langle\Phi(x),\pi\rangle\right)=\sup_{x\in X}\left(\langle g\cdot x,\pi\rangle-\langle\Phi(x),\pi\rangle\right)\\
 & =\sup_{x\in X}\langle g\cdot x-\Phi(x),\pi\rangle=\langle\sup_{x\in X}\left(g\cdot x-\Phi(x)\right),\pi\rangle=\langle\Phi^{*}(g),\pi\rangle=\langle\Phi^{*}\left(\frac{p}{\pi}\right),\pi\rangle,
\end{align*}
which is exactly the desired form.
\item We could also investigate functionals of the form $\widetilde{\cE}_{\Phi}(x)=\langle\Phi(x-\mathbb{E}_{\pi}x),\pi\rangle$
as a generalization of the variance $\Phi(r)=r^{2}$. These functionals
have the property that they are always convex. However, we will restrict
to the above form.
\end{enumerate}
\end{rem}

Analogously, we define $\hat{\cE}_{\Phi}$ on $\hat{X}$ by replacing
$x$ by $\hat{x}$ and $\pi$ by $\hat{\pi}$. The functionals $\cE_{\Phi}$
on $X$ and $\hat{\cE}_{\Phi}$ on $\hat{X}$ can be estimated as
follows.
\begin{prop}
\label{prop:EstimateEnergy}We have the following relation for the
functionals regarding coarse-graining and reconstruction.
\end{prop}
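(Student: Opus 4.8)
The plan is to establish the two companion relations
\[
\hat{\cE}_\Phi(Nx) \le \cE_\Phi(x) \quad (x \in X), \qquad \cE_\Phi(M\hat x) = \hat{\cE}_\Phi(\hat x) \quad (\hat x \in \hat X),
\]
which together say that reconstruction through $M$ preserves the energy exactly while coarse-graining through $N$ can only decrease it. Before splitting $\cE_\Phi$ into its two terms, I would first dispose of the normalisation term. By Lemma~\ref{lem:Expectation}, whenever $\hat x = Nx$ or $x = M\hat x$ we have $\mathbb{E}_\pi x = \mathbb{E}_{\hat\pi}\hat x$, hence $\Phi(\mathbb{E}_\pi x) = \Phi(\mathbb{E}_{\hat\pi}\hat x)$. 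The subtracted terms therefore cancel in both relations, and it remains only to compare the ``bulk'' terms $\langle \Phi(x),\pi\rangle$ and $\langle\Phi(\hat x),\hat\pi\rangle$.

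For the equality I would use that $M$ is a deterministic Markov matrix. Since $(M\hat x)_i = \hat x_{\phi(i)}$, applying $\Phi$ componentwise commutes with $M$, i.e.\ $\Phi(M\hat x) = M\Phi(\hat x)$. Consequently
\[
\langle \Phi(M\hat x),\pi\rangle = \langle M\Phi(\hat x),\pi\rangle = \langle \Phi(\hat x), M^*\pi\rangle = \langle \Phi(\hat x),\hat\pi\rangle,
\]
where the last step uses $\hat\pi = M^*\pi$. Together with the cancellation of the normalisation term this yields $\cE_\Phi(M\hat x) = \hat{\cE}_\Phi(\hat x)$.

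The inequality is the heart of the matter and rests on recognising $N$ as a conditional expectation over the clusters. Writing out $N = Q_{\hat\pi}^{-1}M^*Q_\pi$ in coordinates gives $(Nx)_k = \hat\pi_k^{-1}\sum_{i\in\phi^{-1}(k)}\pi_i x_i$, and since $\hat\pi_k = \sum_{i\in\phi^{-1}(k)}\pi_i$, the coefficients $\pi_i/\hat\pi_k$ are nonnegative and sum to one over each fibre $\phi^{-1}(k)$. Thus $(Nx)_k$ is a genuine convex combination of the values $\{x_i : i\in\phi^{-1}(k)\}$, and convexity of $\Phi$ (Jensen's inequality applied on each cluster) gives $\Phi\bigl((Nx)_k\bigr) \le \hat\pi_k^{-1}\sum_{i\in\phi^{-1}(k)}\pi_i\Phi(x_i)$. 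Multiplying by $\hat\pi_k$ and summing over $k$ collapses the right-hand side to $\sum_i \pi_i\Phi(x_i)$, which is precisely $\langle\Phi(Nx),\hat\pi\rangle \le \langle\Phi(x),\pi\rangle$; combined with the cancelled normalisation this is $\hat{\cE}_\Phi(Nx)\le\cE_\Phi(x)$.

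The only genuinely non-routine step is the identification of $N$ with fibrewise $\pi$-weighted averaging, after which Jensen does all the work; I expect no real obstacle beyond bookkeeping. As a byproduct, chaining the two relations through the projection $P = MN$ yields $\cE_\Phi(Px) = \hat{\cE}_\Phi(Nx) \le \cE_\Phi(x)$, i.e.\ the Markov projection is itself energy-dissipating, which is the form most useful for the Poincar\'e-type estimates in this section.
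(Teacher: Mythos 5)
Your proposal is correct and follows essentially the same route as the paper: Lemma~\ref{lem:Expectation} cancels the normalisation terms, the commutation $\Phi(M\hat x)=M\Phi(\hat x)$ for the deterministic matrix $M$ together with $M^*\pi=\hat\pi$ gives the equality (the paper writes this out directly as the fibre sums $\hat\pi_j=\sum_{i\in\phi^{-1}(j)}\pi_i$), and your fibrewise Jensen argument is exactly the paper's pointwise inequality $\Phi(Nx)\le N\Phi(x)$ dualized via $N^*\hat\pi=\pi$. The differences are purely presentational (coordinates versus operator duality), and your closing remark that $P=MN$ is energy-dissipating is a correct immediate byproduct.
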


\begin{enumerate}
\item For all $\hat{x}\in\hat{X}$, $x=M\hat{x}$ implies $\cE_{\Phi}(x)=\hat{\cE}_{\Phi}(\hat{x})$.
This holds even for all functions $\Phi:\R\to\R$ not necessarily
convex.
\item For all $x\in X$, $\hat{x}=Nx$ implies $\hat{\cE}_{\Phi}(\hat{x})\leq\cE_{\Phi}(x)$.
\end{enumerate}
\begin{proof}
For the first claim, take any $\hat{x}\in\hat{X}$. Then we have with
Lemma \ref{lem:Expectation} that
\begin{align*}
\cE_{\Phi}(x)=\cE_{\Phi}(M\hat{x}) & =\sum_{i}\pi_{i}\Phi(\left(M\hat{x}\right)_{i})-\Phi(\mathbb{E}_{\pi}(M\hat{x}))=\sum_{j\in\hat{Z}}\sum_{i=\phi^{-1}(j)}\pi_{i}\Phi(\left(M\hat{x}\right)_{i})-\mathbb{E}_{\hat{\pi}}\hat{x}\\
 & =\sum_{j\in\hat{Z}}\sum_{i=\phi^{-1}(j)}\pi_{i}\Phi(\hat{x}_{\phi(i)})-\mathbb{E}_{\hat{\pi}}\hat{x}=\sum_{j\in\hat{Z}}\Phi(\hat{x}_{j})\sum_{i=\phi^{-1}(j)}\pi_{i}-\mathbb{E}_{\hat{\pi}}\hat{x}\\
 & =\sum_{j\in\hat{Z}}\hat{\pi}_{j}\Phi(\hat{x}_{j})-\mathbb{E}_{\hat{\pi}}\hat{x}=\hat{\cE}_{\Phi}(\hat{x}),
\end{align*}
where we used that $\hat{\pi}_{j}=\sum_{i=\phi^{-1}(j)}\pi_{i}$.

For the second claim, take any $x\in X$. Using Jensen's inequality
for the convex function $\Phi$, which means that we have the pointwise
inequality $\Phi(Nx)\leq N\Phi(x)$, we obtain
\begin{align*}
\hat{\cE}_{\Phi}(Nx) & =\langle\hat{\pi},\Phi(Nx)\rangle-\mathbb{E}_{\hat{\pi}}(Nx)\leq\langle\hat{\pi},N\Phi(x)\rangle-\mathbb{E}_{\pi}(x)=\\
 & =\langle N^{*}\hat{\pi},\Phi(x)\rangle-\mathbb{E}_{\pi}(x)=\langle\pi,\Phi(x)\rangle-\mathbb{E}_{\pi}(x)=\cE_{\pi}(x)\,.
\end{align*}
\end{proof}
The log-Sobolev constant is defined by estimating $\cE_{\Phi}(x^{2})$
where $\Phi(r)=r\log r$, i.e. $\mathrm{Ent}_{\pi}(x^{2})$. From
the above proposition it is not clear that is possible to obtain estimates
between $\mathrm{Ent}_{\hat{\pi}}(\left(\hat{x}\right)^{2})$ and
$\mathrm{Ent}_{\pi}(\left(M\hat{x}\right)^{2})$. In fact, we prove
that this is possible even for general convex functions not necessarily
quadratic.
\begin{prop}
\label{prop:EstimateEntropy}Let $g:\R\to[0,\infty[$ be convex and
satisfy $g(x)>0$ if $x\neq0$ and $g(0)=0$. Then we have 
\[
\forall\hat{x}\in\hat{X}\quad:\quad\mathrm{Ent}_{\hat{\pi}}(g(\hat{x}))\leq\mathrm{Ent}_{\pi}(g(M\hat{x}))\,,
\]
where again with a small abuse of notation $g(x)$ is meant component-wise,
i.e. $g(x)_{i}=g(x_{i})$.
\end{prop}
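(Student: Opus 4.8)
The plan is to reduce the claim to the equality part, item 1, of Proposition~\ref{prop:EstimateEnergy}. The decisive observation is that the coarse-graining operator $M$ is deterministic and therefore commutes with every pointwise nonlinearity: since $(M\hat{x})_i=\hat{x}_{\phi(i)}$, we have $g\bigl((M\hat{x})_i\bigr)=g\bigl(\hat{x}_{\phi(i)}\bigr)=\bigl(M\,g(\hat{x})\bigr)_i$ for each $i$, that is, $g(M\hat{x})=M\,g(\hat{x})$. Writing $\hat{y}:=g(\hat{x})\in\hat{X}$, which is nonnegative because $g\geq 0$, the right-hand side of the asserted inequality becomes $\mathrm{Ent}_{\pi}(M\hat{y})$.

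Next I would recall that $\mathrm{Ent}_{\pi}=\cE_{\Phi}$ for $\Phi(r)=r\log r-r+1$; by the affine-invariance established in Lemma~\ref{lem:PropertiesEnergyFunctional} I may equally use $\Phi(r)=r\log r$, which is defined on the nonnegative arguments occurring here (with the convention $0\log 0=0$, legitimate since $g(\hat{x}_j)=0$ only when $\hat{x}_j=0$). Item~1 of Proposition~\ref{prop:EstimateEnergy} asserts $\cE_{\Phi}(M\hat{y})=\hat{\cE}_{\Phi}(\hat{y})$ for arbitrary $\Phi$; its proof uses only $\hat{\pi}_j=\sum_{i\in\phi^{-1}(j)}\pi_i$, the value-copying property of $M$, and Lemma~\ref{lem:Expectation}, none of which requires convexity. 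Applying it with $\hat{y}=g(\hat{x})$ yields $\mathrm{Ent}_{\pi}(g(M\hat{x}))=\mathrm{Ent}_{\hat{\pi}}(g(\hat{x}))$, which is in fact stronger than the stated inequality.

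If one prefers a self-contained derivation, the same identity follows by direct computation as in the proof of Proposition~\ref{prop:EstimateEnergy}(1): grouping the fine sum over blocks gives $\sum_i\pi_i\,\Phi\bigl(g(\hat{x}_{\phi(i)})\bigr)=\sum_j\hat{\pi}_j\,\Phi\bigl(g(\hat{x}_j)\bigr)$, while the two normalization terms coincide because $\mathbb{E}_{\pi}(g(M\hat{x}))=\mathbb{E}_{\hat{\pi}}(g(\hat{x}))$ by Lemma~\ref{lem:Expectation} applied to $g(\hat{x})$.

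There is essentially no analytic obstacle here: the only point requiring care is recognizing the commutation $g\circ M=M\circ g$, which is exactly where determinism of $M$ enters and which is what makes the transfer work for every $g$, convex or not. The convexity together with the conditions $g(0)=0$ and $g(x)>0$ for $x\neq 0$ are not needed for the identity itself; they serve to make $\mathrm{Ent}_{\pi}(g(\cdot))$ a genuine deviation-from-constant functional (nonnegative, and vanishing iff $g(\hat{x})$ is constant), and the one-sided form $\leq$ is precisely the direction used afterwards to compare log-Sobolev constants through the choice $g(r)=r^2$.
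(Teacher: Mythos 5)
Your proof is correct, and it actually establishes more than the proposition asserts: you obtain the identity $\mathrm{Ent}_{\hat{\pi}}(g(\hat{x}))=\mathrm{Ent}_{\pi}(g(M\hat{x}))$, of which the stated inequality is a trivial consequence. The paper takes a genuinely different route: it tilts the potential, replacing $\Phi(r)=r\log r$ by $\widetilde{\Phi}(r)=r\log r+c\,r$ with $c$ chosen so that $\widetilde{\Phi}$ is nondecreasing above a level $C>0$ with $\langle g(M\hat{x}),\pi\rangle\geq C$ (this is where the hypotheses $g(0)=0$, $g>0$ off zero enter), then shows that the $\pi$- and $\hat{\pi}$-averages of $\widetilde{\Phi}\circ g$ coincide by the same block-summation you give in your self-contained variant, and finally compares the normalization terms via Jensen's inequality $g(M\hat{x})\leq Mg(\hat{x})$ combined with the monotonicity of $\widetilde{\Phi}$. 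Your argument replaces that entire second step by the observation that the deterministic $M$ commutes with every pointwise nonlinearity, so $g(M\hat{x})=Mg(\hat{x})$ holds \emph{exactly}, and Proposition \ref{prop:EstimateEnergy}(1) applies verbatim to $\hat{y}=g(\hat{x})$; in particular the paper's Jensen step is in fact an equality in this setting, the tilting (and the somewhat delicate choice of the constant $c$) is unnecessary, and, as you correctly note, convexity of $g$ plays no role in the identity itself. What the paper's mechanism would buy is robustness: Jensen plus monotonicity is the argument that survives when the averaging inside $g$ is performed by a genuinely stochastic, non-deterministic Markov matrix — although even then the paper's first identity still exploits determinism, so for the proposition as actually stated your shorter argument is strictly stronger and equally rigorous.
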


\begin{proof}
The proof is done in two steps. First, we shift the function $\Phi(r)=r\log r$
to incorporate $g$. Secondly, we derive the estimate.
\begin{description}
\item [{1.$\,$Step:}] Clearly, we have equality for $\hat{x}=0$. So let
us take $\hat{x}\neq0$. Then, $g(\hat{x})>0$ and because $\pi>0$,
there is a constant $C>0$ such that $\langle g(M\hat{x}),\pi\rangle\geq C>0$.
Let us define $c>0$ by $c:=\e^{-(1+C)}$. We define $\widetilde{\Phi}(r)=r\log r+c\,r$,
which has its minimum at $r=C$. So we have $\widetilde{\Phi}(r_{1})\geq\widetilde{\Phi}(r_{2})$
for $r_{1}\geq r_{2}\geq C$. Recalling that the energy functional
is invariant under affine shifts (Lemma \ref{lem:PropertiesEnergyFunctional}),
we have that $\mathrm{Ent}_{\pi}(x)=\cE_{\widetilde{\Phi}}(x)$ and
analogously also for the coarse-grained states $\hat{x}$. So we are
going to show $\hat{\cE}_{\widetilde{\Phi}}(g(\hat{x}))\leq\cE_{\widetilde{\Phi}}(g(M\hat{x}))$,
or, equivalently,
\[
\langle\widetilde{\Phi}(g(\hat{x})),\hat{\pi}\rangle-\widetilde{\Phi}(\langle g(\hat{x}),\hat{\pi}\rangle)\leq\langle\widetilde{\Phi}(g(M\hat{x})),\pi\rangle-\widetilde{\Phi}(\langle g(M\hat{x}),\pi\rangle)\,.
\]
\item [{2.$\,$Step:}] We observe that (completely similar to the proof
of Proposition \ref{prop:EstimateEnergy}) 
\begin{align*}
\langle\widetilde{\Phi}(g(M\hat{x})),\pi\rangle & =\sum_{i}\widetilde{\Phi}(g(M\hat{x}))_{i}\pi_{i}=\sum_{i}\widetilde{\Phi}(g(M\hat{x})_{i})\pi_{i}=\sum_{j\in\hat{Z}}\sum_{i=\phi^{-1}(j)}\pi_{i}\widetilde{\Phi}\circ g(\hat{x}_{\phi(i)})\\
 & =\sum_{j\in\hat{Z}}\hat{\pi}_{j}\widetilde{\Phi}\circ g(\hat{x}_{j})=\langle\widetilde{\Phi}(g(\hat{x})),\hat{\pi}\rangle\,.
\end{align*}
Hence, it suffices to prove that $\widetilde{\Phi}(\langle g(\hat{x}),\hat{\pi}\rangle)\geq\widetilde{\Phi}(\langle g(M\hat{x}),\pi\rangle)$.
To see this we use Jensen's inequality which states that $g(M\hat{x})\leq Mg(\hat{x})$
for all Markov matrices $M$ and convex functions $g$. Hence, we
get $\langle g(M\hat{x}),\pi\rangle\leq\langle Mg(\hat{x}),\pi\rangle=\langle g(\hat{x}),M^{*}\pi\rangle=\langle g(\hat{x}),\hat{\pi}\rangle$.
Since $\langle g(M\hat{x}),\pi\rangle\geq C$ and the function $\widetilde{\Phi}$
is monotone for arguments larger than $C$ by construction, we conclude
that also $\widetilde{\Phi}\left(\langle g(M\hat{x}),\pi\rangle\right)\leq\widetilde{\Phi}\left(\langle g(\hat{x}),\hat{\pi}\rangle\right)$.
Hence, the claim is proved.
\end{description}
\end{proof}

\subsection{Dirichlet forms and Poincaré-type estimates}

To estimate Poincaré-type constants, we introduce the Dirichlet form
(or dissipation) for $K$ and $\hat{K}$ by
\[
\cD_{K}(x)=\frac{1}{2}\sum_{i,j}\pi_{i}K_{ij}(x_{i}-x_{j})^{2},\quad\cD_{\hat{K}}(x)=\frac{1}{2}\sum_{i,j}\hat{\pi}_{i}\hat{K}_{ij}(x_{i}-x_{j})^{2}\,.
\]

Without loss of generality, we assume that $K$ and $\hat{K}$ satisfy
detailed balance, because the Dirichlet form takes into account only
the symmetric part of $Q_{m}K$. Using $m=Q_{\pi}K$ and $\hat{m}=Q_{\hat{\pi}}\hat{K}$,
the Dirichlet form is related to the generator $A=K-\mathrm{id}$
by 
\[
\cD_{K}(x)=\frac{1}{2}\lla Dx,Q_{m}Dx\rra=\frac{1}{2}\langle x,D^{*}Q_{m}Dx\rangle=-\langle x,A^{*}Q_{\pi}x\rangle=-\langle Ax,Q_{\pi}x\rangle=-\langle x\cdot Ax,\pi\rangle\,.
\]
Moreover, we have $\cD_{\hat{K}}(\hat{x})=\tfrac{1}{2}\lla\hat{D}\hat{x},Q_{\hat{m}}\hat{D}\hat{x}\rra=-\langle\hat{x}\cdot\hat{A}\hat{x},\hat{\pi}\rangle$.

We are interested in estimating the spectral gap $\lambda=\lambda(K,\Phi)$,
which is defined by the largest constant $c>0$ that satisfies the
discrete Poincaré-type inequality
\[
\cD_{K}(x)\geq c\,\cE_{\Phi}(x),\quad\mathrm{i.e.}\quad\lambda(K,\Phi)=\inf\left\{ \frac{\mathcal{D}_{K}(x)}{\cE_{\Phi}(x)}:\forall x\ \cE_{\Phi}(x)\neq0\right\} \,.
\]
Analogously, we define $\hat{\lambda}=\hat{\lambda}(\hat{K},\Phi)=\inf\left\{ \frac{\mathcal{D}_{\hat{K}}(\hat{x})}{\hat{\cE}_{\Phi}(\hat{x})}:\forall\hat{x}\ \hat{\cE}_{\Phi}(\hat{x})\neq0\right\} $.
We also define the log-Sobolev constants $\lambda_{g,\mathrm{LS}}=\inf\left\{ \frac{\cD_{K}(x)}{\mathrm{Ent}_{\pi}(g(x))}:\mathrm{Ent}_{\pi}(g(x))\neq0\right\} $
and analogously $\hat{\lambda}_{g,\mathrm{LS}}$.
\begin{thm}
With the above notation, we have the following:
\begin{enumerate}
\item For all $\hat{x}\in\hat{X}$ we have that $\cD_{K}(M\hat{x})=\cD_{\hat{K}}(\hat{x})$.
\item For all functions $\Phi$ we have that $\lambda(K,\Phi)\leq\lambda(\hat{K},\Phi)$.
\item We have for the log-Sobolev constants that $\lambda_{g,LS}\leq\hat{\lambda}_{g,\mathrm{LS}}$.
\end{enumerate}
\end{thm}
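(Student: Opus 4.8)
The plan is to establish the three claims sequentially, leveraging the coarse-graining machinery already developed. The central tool is the relation $DM = \widetilde{M}\hat{D}$ from Theorem \ref{thm:CoarseGrainedIncidenceMatrix} together with the identity $\hat{m} = \widetilde{M}^*m$.

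For the first claim, I would express the Dirichlet form in its quadratic tensor representation $\cD_K(x) = \tfrac{1}{2}\lla Dx, Q_m Dx\rra$, as recalled just before the theorem statement. Substituting $x = M\hat{x}$ and using $DM\hat{x} = \widetilde{M}\hat{D}\hat{x}$, I get $\cD_K(M\hat{x}) = \tfrac{1}{2}\lla \widetilde{M}\hat{D}\hat{x}, Q_m \widetilde{M}\hat{D}\hat{x}\rra$. Now I would move $\widetilde{M}$ across the pairing using its adjoint: since $\lla \widetilde{M}\hat{a}, Q_m \widetilde{M}\hat{b}\rra = \lla \hat{a}, \widetilde{M}^* Q_m \widetilde{M}\hat{b}\rra$, it suffices to show $\widetilde{M}^* Q_m \widetilde{M} = Q_{\hat{m}}$. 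This is precisely the tensor-space analogue of Lemma \ref{lem:DetMarkovMatrix} (the relation $Q_{\hat{\pi}} = M^*Q_\pi M$), and I expect it to follow from $\hat{m} = \widetilde{M}^*m$ by the same deterministic-operator argument, giving $\cD_K(M\hat{x}) = \tfrac{1}{2}\lla \hat{D}\hat{x}, Q_{\hat{m}}\hat{D}\hat{x}\rra = \cD_{\hat{K}}(\hat{x})$.

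For the second claim, I would argue variationally. Given any $\hat{x}$ with $\hat{\cE}_\Phi(\hat{x}) \neq 0$, set $x = M\hat{x}$; by Proposition \ref{prop:EstimateEnergy}(1) we have $\cE_\Phi(M\hat{x}) = \hat{\cE}_\Phi(\hat{x})$, and by claim (1) we have $\cD_K(M\hat{x}) = \cD_{\hat{K}}(\hat{x})$. Hence every admissible ratio $\cD_{\hat{K}}(\hat{x})/\hat{\cE}_\Phi(\hat{x})$ equals some admissible ratio $\cD_K(M\hat{x})/\cE_\Phi(M\hat{x})$ realized on the range of $M$. Since the infimum defining $\lambda(K,\Phi)$ is taken over all $x \in X$ — a larger set than $\mathrm{Range}(M)$ — the infimum over the smaller set is at least as large, yielding $\lambda(K,\Phi) \leq \hat{\lambda}(\hat{K},\Phi)$.

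The third claim follows the same variational scheme but requires the sharper entropy estimate. For $x = M\hat{x}$, claim (1) again gives $\cD_K(M\hat{x}) = \cD_{\hat{K}}(\hat{x})$, while Proposition \ref{prop:EstimateEntropy} gives $\mathrm{Ent}_{\hat{\pi}}(g(\hat{x})) \leq \mathrm{Ent}_\pi(g(M\hat{x}))$. Therefore $\cD_{\hat{K}}(\hat{x})/\mathrm{Ent}_{\hat{\pi}}(g(\hat{x})) \geq \cD_K(M\hat{x})/\mathrm{Ent}_\pi(g(M\hat{x})) \geq \lambda_{g,\mathrm{LS}}$, and taking the infimum over admissible $\hat{x}$ gives $\lambda_{g,\mathrm{LS}} \leq \hat{\lambda}_{g,\mathrm{LS}}$. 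The main obstacle is verifying the intertwining identity $\widetilde{M}^* Q_m \widetilde{M} = Q_{\hat{m}}$ cleanly in the tensor setting; once that is in hand, claims (2) and (3) are essentially bookkeeping with the direction of the inequality dictated by restricting the infimum to $\mathrm{Range}(M)$.
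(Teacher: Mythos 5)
Your proof is correct, and parts (2) and (3) coincide with the paper's argument: restrict the infimum to $\mathrm{Range}(M)$ and combine part (1) with $\cE_{\Phi}(M\hat{x})=\hat{\cE}_{\Phi}(\hat{x})$ from Proposition \ref{prop:EstimateEnergy}, respectively with $\mathrm{Ent}_{\hat{\pi}}(g(\hat{x}))\leq\mathrm{Ent}_{\pi}(g(M\hat{x}))$ from Proposition \ref{prop:EstimateEntropy}; your check that admissibility transfers (nonvanishing of the coarse denominator forces nonvanishing of the fine one) is exactly what makes the paper's terse ``hence'' work. For part (1), however, you take a genuinely different route. You stay entirely in the tensor formulation, combining $DM=\widetilde{M}\hat{D}$ from Theorem \ref{thm:CoarseGrainedIncidenceMatrix} with the intertwining identity $\widetilde{M}^{*}Q_{m}\widetilde{M}=Q_{\hat{m}}$, which you flag as the main obstacle; it is resolved exactly as you anticipate, since $\widetilde{M}$ is deterministic by Proposition \ref{prop:CoarseGrainingFluxes} and the proof of Lemma \ref{lem:DetMarkovMatrix} uses only multiplicativity --- or by the one-line computation $\left(\widetilde{M}^{*}Q_{m}\widetilde{M}\hat{b}\right)_{kl}=\sum_{i\in\phi^{-1}(k)}\sum_{j\in\phi^{-1}(l)}m_{ij}\hat{b}_{kl}=\hat{m}_{kl}\hat{b}_{kl}$, which moreover needs no positivity of $m$ in this direction (only the invertibility of $Q_{\hat{m}}$, which your argument never uses, would). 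The paper instead passes through the generator: it writes $\cD_{K}(M\hat{x})=-\la M\hat{x},A^{*}Q_{\pi}M\hat{x}\ra$, uses $Q_{\pi}M=N^{*}Q_{\hat{\pi}}$ and $\hat{A}=NAM$ to get $-\la\hat{x},\hat{A}^{*}Q_{\hat{\pi}}\hat{x}\ra=\cD_{\hat{K}}(\hat{x})$, and never invokes $DM=\widetilde{M}\hat{D}$ at this point. What your route buys is slightly more generality: it shows $\tfrac{1}{2}\lla DM\hat{x},Q_{m}DM\hat{x}\rra=\tfrac{1}{2}\lla\hat{D}\hat{x},Q_{\hat{m}}\hat{D}\hat{x}\rra$ for an arbitrary nonnegative edge weight $m$ with $\hat{m}=\widetilde{M}^{*}m$, independently of any underlying Markov generator or detailed-balance structure, whereas the paper's computation is shorter given the identities displayed immediately before the theorem.
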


\begin{proof}
We have 
\begin{align*}
\cD_{K}(M\hat{x}) & =\frac{1}{2}\lla DM\hat{x},Q_{m}DM\hat{x}\rra=-\langle M\hat{x},A^{*}Q_{\pi}M\hat{x}\rangle\\
 & =-\langle\hat{x},M^{*}A^{*}N^{*}Q_{\hat{\pi}}\hat{x}\rangle=-\langle\hat{x},\hat{A}^{*}Q_{\hat{\pi}}\hat{x}\rangle=\frac{1}{2}\langle\hat{x},\hat{D}^{*}Q_{\hat{m}}D\hat{x}\rangle=\cD_{\hat{K}}(\hat{x})\,,
\end{align*}
which is the first claim. Hence, we obtain the following relations
for the functional inequalities
\begin{align*}
\lambda & =\inf\left\{ \frac{\cD_{K}(x)}{\cE_{\Phi}(x)}:\cE_{\Phi}(x)\neq0\right\} \leq\inf\left\{ \frac{\cD_{K}(M\hat{x})}{\cE_{\Phi}(M\hat{x})}:\cE_{\Phi}(M\hat{x})\neq0\right\} \\
 & =\inf\left\{ \frac{\cD_{\hat{K}}(\hat{x})}{\cE_{\Phi}(M\hat{x})}:\cE_{\Phi}(M\hat{x})\neq0\right\} \,.
\end{align*}
For the second claim, we use Proposition \ref{prop:EstimateEnergy}
which implies that $\cE_{\Phi}(M\hat{x})=\hat{\cE}_{\Phi}(\hat{x})$
and hence, $\lambda\leq\hat{\lambda}$. For the third claim we use
$\mathrm{Ent}_{\hat{\pi}}(g(\hat{x}))\leq\mathrm{Ent}_{\pi}(g(M\hat{x}))$
by Proposition \ref{prop:EstimateEntropy} to obtain the bound $\lambda_{g,LS}\leq\hat{\lambda}_{g,\mathrm{LS}}$.
\end{proof}
Proposition \ref{prop:EstimateEnergy} states that we have $\cE_{\pi}(x)\geq\cE_{\hat{\pi}}(Nx)$
for all $x\in X$. So naturally the question arises whether it is
possible to obtain uniform estimates between $\cD_{K}(x)$ and $\cD_{\hat{K}}(Nx)$.
The counterexample in the next section shows that this is in general
not true.

\subsection{Counterexample}

We compute and compare $\cD_{\hat{K}}(Nx)$ and $\cD_{K}(x)$ for
a fixed $x\in X$. We have
\begin{align*}
\cD_{\hat{K}}(Nx) & =\frac{1}{2}\lla\hat{D}Nx,Q_{\hat{m}}\hat{D}Nx\rra=\frac{1}{2}\langle x,N^{*}\hat{D}^{*}Q_{\hat{m}}\hat{D}Nx\rangle=-\langle x,P^{*}A^{*}Q_{\pi}Px\rangle=\\
 & =-\langle Px,Q_{\pi}APx\rangle=-\langle Px\cdot APx,\pi\rangle\,.
\end{align*}
Recall, that we have $\cD_{K}(x)=-\langle x\cdot Ax,\pi\rangle$.
It is clear that $\cD_{K}(x)\geq\cD_{\hat{K}}(Nx)$ holds for all
$x\in\mathrm{Range}(P)$ (because then the inequality is a trivial
equality) and also for all $x\in\mathrm{Range}(\mathrm{id}-P)=\mathrm{Ker}(P)$
(because $\cD_{\hat{K}}(Nx)=0$). In particular, we always have $\cD_{K}(x)\geq\cD_{\hat{K}}(Nx)$
in the simple case of $Z=\left\{ 1,2\right\} $.

For $a\geq0$, we define on $\R^{3}$ the parameter dependent Markov
generator
\[
A_{a}=\begin{pmatrix}-8 & 4 & 4\\
1 & -2 & 1\\
a & a & -2a
\end{pmatrix}\,.
\]
Then $A_{a}^{*}$ has the stationary measure $\pi_{a}=\frac{1}{5a+4}(a,4a,4)^{\mathrm{T}}$.
One easily checks that $A_{a}$ satisfies detailed balance with respect
to $\pi_{a}$. As in the example from Section \ref{subsec:Example},
we define the coarse-graining function $\phi:Z\rightarrow\hat{Z}$
with $\phi(1)=\hat{1}$ and $\phi(2)=\phi(3)=\hat{2}$. The corresponding
Markov operator $M:\hat{X}\rightarrow X$ is given by $M=\begin{pmatrix}1\\
 & 1\\
 & 1
\end{pmatrix}$. The coarse-grained stationary measure is given by $\hat{\pi}_{a}=M^{*}\pi_{a}=\frac{1}{5a+4}(a,4a+4)^{\mathrm{T}}\,.$The
inverse operator $N_{a}:X\rightarrow\hat{X}$ and the projection $P_{a}:X\to X$
are given by
\[
N_{a}=Q_{\hat{\pi}_{a}}^{-1}M^{*}Q_{\pi_{a}}=\begin{pmatrix}1\\
 & \frac{a}{a+1} & \frac{1}{a+1}
\end{pmatrix}\,,P_{a}=MN_{a}=\begin{pmatrix}1\\
 & \frac{a}{a+1} & \frac{1}{a+1}\\
 & \frac{a}{a+1} & \frac{1}{a+1}
\end{pmatrix}\,.
\]
We compute $\cD_{K_{a}}(x)$ and $\cD_{\hat{K_{a}}}(N_{a}x)$ for
$x=(3,1,2)^{\mathrm{T}}$. We have $-\langle x\cdot A_{a}x,\pi_{a}\rangle=\frac{24a}{5a+4}$.
Moreover, we have $-\langle P_{a}x\cdot A_{a}P_{a}x,\pi_{a}\rangle=\frac{8a(1+2a)^{2}}{\left(a+1\right)^{2}(5a+4)}\,.$
Hence we have that $\cD_{K_{a}}(x)\geq\cD_{\hat{K}_{a}}(N_{a}x)$
is equivalent to
\begin{align*}
\frac{24a}{5a+4} & \geq\frac{8a(1+2a)^{2}}{\left(a+1\right)^{2}(5a+4)}\ \Leftrightarrow\ (1+2a)^{2}\leq3\left(a+1\right)^{2}\ \Leftrightarrow\ a\leq1+\sqrt{3}=:a_{*}\,.
\end{align*}
In particular, we have for $x=(3,1,2)^{\mathrm{T}}$ that $\cD_{K_{a}}(x)\geq\cD_{\hat{K}_{a}}(N_{a}x)$
for $a\in[0,a^{*}]$ and that $\cD_{K_{a}}(x)\leq\cD_{\hat{K}_{a}}(N_{a}x)$
for $a\in[a_{*},\infty[$. Summarizing, it is not possible to have
uniform estimates between $\cD_{K_{a}}$ and $\cD_{\hat{K}_{a}}(N_{a}\cdot)$.
Hence, no inequality for the Poincaré-type constants can be expected.

\vspace{1.5cm}\noindent \textbf{Acknowledgement:} The research was
supported by Deutsche Forschungsgemeinschaft (DFG) through the Collaborative
Research Center SFB 1114 ‘Scaling Cascades in Complex Systems’ (Project
No. 235221301), Subproject C05 ‘Effective models for materials and
interfaces with multiple scales’.

{\footnotesize{}\bibliographystyle{/Users/astephan/Documents/Science/my_alpha}

\begin{thebibliography}{11}\itemsep0.1em

\bibitem[Bol98]{Boll98MGT}
{\scshape B.~Bollobas}.
\newblock {\em Modern Graph Theory}.
\newblock Springer, 1998.

\bibitem[BoT06]{BoTe06LSI}
{\scshape S.~G.~Bobkov {\upshape and} P.~Tetali}.
\newblock Modified logarithmic Sobolev inequalities in discrete settings.
\newblock {\em Journal of Theoretical Probability}, 19(2), 2006.

\bibitem[ChS20]{ChSc20CBMR}
{\scshape X.~Cheng {\upshape and} J.~M.~Scherpen}.
\newblock Clustering-based model reduction of laplacian dynamics with weakly
  connected topology.
\newblock {\em IEEE Trans. Automatic Control}, 65(10), 2020.

\bibitem[CY{\etalchar{*}}20]{CYRS20ROMNS}
{\scshape X.~Cheng, L.~Yu, D.~Ren, {\upshape and} J.~M.~Scherpen}.
\newblock Reduced order modeling of diffusively coupled network systems: An
  optimal edge weighting approach.
\newblock {\em arXiv:2003.03559}, 2020.

\bibitem[ErM12]{ErbMaa12RCFM}
{\scshape M.~Erbar {\upshape and} J.~Maas}.
\newblock Ricci curvature of finite {M}arkov chains via convexity of the
  entropy.
\newblock {\em Arch. Ration. Mech. Anal.}, 206(3), 997--1038, 2012.

\bibitem[FaF21]{FaFa21SOSLS}
{\scshape O.~Faust {\upshape and} H.~Fawzi}.
\newblock Sum-of-squares proofs of logarithmic {S}obolev inequalities on finite
  markov chains.
\newblock {\em arXiv:2101.04988}, 2021.

\bibitem[FaS18]{FatShu18CTUMCDS}
{\scshape M.~Fathi {\upshape and} Y.~Shu}.
\newblock Curvature and transport inequalities for {M}arkov chains in discrete
  spaces.
\newblock {\em Bernoulli}, 24(1), 672--698, 2018.

\bibitem[Joh17]{John2017DLSI}
{\scshape O.~Johnson}.
\newblock A discrete log-sobolev inequality under a Bakry-\'Emery type
  condition.
\newblock {\em Ann. Inst. Henri Poincar\'e Probab. Stat.}, 53(4), 1952--1970,
  2017.

\bibitem[MiS20]{MieSte19ECLRS}
{\scshape A.~Mielke {\upshape and} A.~Stephan}.
\newblock Coarse-graining via {EDP}-convergence for linear fast-slow reaction
  systems.
\newblock {\em M3AS: Math. Models Meth. Appl. Sci.}, 30(09), 1765--1807, 2020.

\bibitem[Nor97]{Norr97MC}
{\scshape J.~R.~Norris}.
\newblock {\em Markov chains}, volume~2.
\newblock Cambridge University Press, 1997.

\bibitem[PMK06]{PMK06RedCME}
{\scshape S.~Peles, B.~Munsky, {\upshape and} M.~Khammash}.
\newblock Reduction and solution of the chemical master equation using time
  scale separation and finite state projection.
\newblock {\em Journal of Chemical Physics}, 124(204104), 2006.

\bibitem[Ste13]{Step13IMOMDT}
{\scshape H.~Stephan}.
\newblock Inequalities for {M}arkov operators, majorization and the direction of
  time.
\newblock {\em WIAS preprint 1896}, 2013.

\bibitem[Ste21]{Step20EDPCLRDS}
{\scshape A.~Stephan}.
\newblock {EDP}-convergence for a linear reaction-diffusion system with fast
  reversible reaction.
\newblock {\em Calc. Var. Partial Diff. Eqns.}, 60(226), pp. 35, 2021.

\bibitem[SvdVR08]{SVR08MOR}
{\scshape W.~H.~Schilders, H.~A.~van~der~Vorst, {\upshape and} J.~Rommes}.
\newblock {\em Model Order Reduction: Theory, Research Aspects and
  Applications}.
\newblock Springer-Verlag Berlin Heidelberg, 2008.

\end{thebibliography}

\newcommand{\etalchar}[1]{$^{#1}$}
\def\cprime{$'$}

}{\footnotesize\par}
\end{document}